\begin{document}

%**************************************************************V
%\layout
%**************************************************************

\newtheorem{theorem}{Theorem}%[section]
\newtheorem{proposition}{Proposition}
\newtheorem{lemma}{Lemma}
\newtheorem{corollary}{Corollary}
\newtheorem{conjecture}[theorem]{Conjecture}
\newtheorem{question}[theorem]{Question}
\newtheorem{problem}[theorem]{Problem}

\theoremstyle{definition}
\newtheorem{definition}[theorem]{Definition}
\newtheorem{example}[theorem]{Example}

\theoremstyle{remark}
\newtheorem{remark}[theorem]{Remark}

\def\theenumi{\roman{enumi}}

\numberwithin{equation}{section}

\renewcommand{\Re}{\operatorname{Re}}
\renewcommand{\Im}{\operatorname{Im}}

%*************************************************************V
\newcommand{\ind}{1\hspace{-.27em}\mbox{\rm l}}
\newcommand{\inde}{1\hspace{-.23em}\mathrm{l}}
\newcommand{\lqn}[1]{\noalign{\noindent $\displaystyle{#1}$}}
\newcommand{\bm}[1]{\mbox{\boldmath{$#1$}}}
\newcommand{\eps}{\varepsilon}
\newcommand{\lip}{{\rm Lip}}
\newcommand{\R}{\mathbb{R}}
\newcommand{\N}{\mathbb{N}}
\newcommand{\Z}{\mathbb{Z}}
\newcommand{\Sym}{\mathfrak{S}}
\newcommand{\Leb}{{\lambda}}
\newcommand{\md}{\mathbb{D}}
\newcommand{\me}{\mathbb{E}}
\newcommand{\Q}{\mathbb{Q}}
\newcommand{\1}{{\sf 1}}
\newcommand{\tmu}{\tilde{\mu}}
\newcommand{\tnu}{\tilde{\nu}}
\newcommand{\hmu}{\hat{\mu}}
\newcommand{\mb}{\bar{\mu}}
\newcommand{\bnu}{\overline{\nu}}
\newcommand{\tx}{\tilde{x}}
\newcommand{\tz}{\tilde{z}}
\newcommand{\A}{{\mathcal A}}
\newcommand{\skria}{{\mathcal A}}
\newcommand{\skrib}{{\mathcal B}}
\newcommand{\skric}{{\mathcal C}}
\newcommand{\skrid}{{\mathcal D}}
\newcommand{\skrie}{{\mathcal E}}
\newcommand{\skrif}{{\mathcal F}}
\newcommand{\skrig}{{\mathcal G}}
\newcommand{\skrih}{{\mathcal H}}
\newcommand{\skrii}{{\mathcal I}}
\newcommand{\skrik}{{\mathcal K}}
\newcommand{\skril}{{\mathcal L}}
\newcommand{\skrim}{{\mathcal M}}
\newcommand{\skrin}{{\mathcal N}}
\newcommand{\skrip}{{\mathcal P}}
\newcommand{\skriq}{{\mathcal Q}}
\newcommand{\skris}{{\mathcal S}}
\newcommand{\skrit}{{\mathcal T}}
\newcommand{\skriu}{{\mathcal U}}
\newcommand{\skriw}{{\mathcal W}}
\newcommand{\skrix}{{\mathcal X}}
\newcommand{\heap}[2]{\genfrac{}{}{0pt}{}{#1}{#2}}
\newcommand{\sfrac}[2]{ \, \mbox{$\frac{#1}{#2}$}}
 \renewcommand{\labelenumi}{(\roman{enumi})}
\newcommand{\ssup}[1] {{\scriptscriptstyle{({#1}})}}

\newcommand{\cal}{\mathcal}
\allowdisplaybreaks

%*************************************************************

\def \R {{\mathbb R}}
\def \HH {{\mathbb H}}
\def \N {{\mathbb N}}
\def \C {{\mathbb C}}
\def \Z {{\mathbb Z}}
\def \Q {{\mathbb Q}}
\def \TT {{\mathbb T}}
\newcommand{\T}{\mathbb T}
\def \Dc {{\mathcal D}}

\newcommand{\tr}[1] {\hbox{tr}\left( #1\right)}

\newcommand{\area}{\operatorname{area}}

\newcommand{\Norm}{\mathcal N}
\newcommand{\simgeq}{\gtrsim}%
\newcommand{\simleq}{\lesssim}
%%these are with amssym, else use {\stackrel{>}{\sim}}

\newcommand{\length}{\operatorname{length}}

\newcommand{\curve}{\mathcal C} % curve
\newcommand{\vE}{\mathcal E} % lattice points on the sphere
\newcommand{\Ec}{\mathcal {E}} % lattice points on the sphere
\newcommand{\Sc}{\mathcal{S}} % lattice points on the sphere

\newcommand{\dist}{\operatorname{dist}}
\newcommand{\supp}{\operatorname{supp}}
\newcommand{\spec}{\operatorname{spec}}
\newcommand{\diam}{\operatorname{diam}}

\newcommand{\Ccap}{\operatorname{Cap}}
\newcommand{\E}{\mathbb E}

\newcommand{\sumstar}{\sideset{}{^\ast}\sum}

\newcommand {\Zc} {\mathcal{Z}} % igor's nodal intersections number
\newcommand{\ninumber}{\Zc}

\newcommand{\zeigen}{E} % eigenvalue
\newcommand{\eigen}{m}%igor's eigenvalue converntion

\newcommand{\ave}[1]{\left\langle#1\right\rangle} %  average

\newcommand{\Var}{\operatorname{Var}}
\newcommand{\Prob}{\operatorname{Prob}}

\newcommand{\var}{\operatorname{Var}}
\newcommand{\Cov}{{\rm{Cov}}}
\newcommand{\meas}{\operatorname{meas}}

\newcommand{\leg}[2]{\left( \frac{#1}{#2} \right)}  % Legendre symbol

\renewcommand{\^}{\widehat}

\newcommand {\Rc} {\mathcal{R}}

\title[Fluctuations of the  Euler-Poincar\'{e} characteristic]
{Fluctuations of the  Euler-Poincar\'{e} characteristic  \\for  
random spherical harmonics}
\author{V. Cammarota, D. Marinucci and I. Wigman}

\address{Department of Mathematics, Universit\`a degli Studi di Roma Tor Vergata } \email{cammarot@mat.uniroma2.it}

\address{Department of Mathematics, Universit\`a degli Studi di Roma Tor Vergata} \email{marinucc@mat.uniroma2.it}

\address{Department of Mathematics, King's College London }
\email{igor.wigman@kcl.ac.uk}

\date{\today}

\thanks{Research Supported by ERC Grant n$^{\text{o}}$ 277742 \emph{Pascal} (V.C., D.M.) and  n$^{\text{o}}$ 335141 (I.W.).
}

\begin{abstract}
In this short note, we build upon recent results from \cite{CMW} to present a precise expression for the asymptotic variance of the Euler-Poincar\'e characteristic for the excursion sets of Gaussian eigenfunctions on ${\cal S}^2$.
\end{abstract}

\maketitle

\section{Introduction and main result}

The geometry of excursion sets for Gaussian random fields has been a subject
of intense research over the last fifteen years; much work has focussed on
the investigation of the Euler-Poincar\'e characteristic, henceforth EPC 
\cite{adlerstflour}. We recall here that the EPC $\chi(A)$ is the unique
integer-valued functional, defined on the ring $\mathcal{C}$ of closed
convex sets in $\mathbb{R}^N$, which equals $\chi(A)=0$ if $A=\emptyset$, $%
\chi(A)=1$ if $A$ is homotopic to the unit ball, and satisfies the
additivity property 
\begin{equation*}
\chi(A \cup B)=\chi(A)+\chi(B)-\chi(A \cap B), \hspace{1cm}\text{for all\;}
A,B \in \mathcal{C}.
\end{equation*}
Clearly, the EPC is a topological invariant (i.e. it is invariant under
homeomorphisms); its investigation for the excursion sets of random fields
was initiated in the late seventies by Robert Adler and his co-authors. This
stream of research has eventually resulted with the discovery of the
beautiful Gaussian Kinematic Formula (GKF) \cite{taylor, adlertaylor}.

More precisely, let $f$ be a real valued random field defined on the
parameter space $\mathcal{M}$; its excursion sets are defined as 
\begin{equation*}
A_{u}(f; {\mathcal{M}})=\left\{ x \in {\mathcal{M}}:f(x)\ge u \right\}, 
\hspace{1cm} u \in{\mathbb{R}}.
\end{equation*}
Let $\mathcal{L}^f_j$'s for $j=0,\dots, \text{dim} (\mathcal{M})$, denotes
the Lipschitz-Killing curvatures for the manifold $\mathcal{M}$ with
Riemannian metric $g^f$ induced by the covariance of $f$, i.e., for $U_x,
V_x \in T_x \mathcal{M}$, the tangent space to $\mathcal{M}$ at $x$ we have 
\begin{equation*}
g^f_x(U_x, V_x):=\mathbb{E}[(U_x f)\cdot (V_x f)],
\end{equation*}
(see \cite{adlertaylor} for further details); in particular $\mathcal{L}_0$
is the EPC. The functions $\rho_j$'s are the so-called Gaussian Minkowski
functionals and they are defined by 
\begin{align}  \label{GKF2}
{\rho}_j(u)= (2 \pi)^{-(j+1)/2} H_{j-1}(u) e^{-u^2/2},
\end{align}
where $H_q(\cdot)$ are the Hermite polynomial of order $q$: 
\begin{equation*}
H_{-1}(u)=1- \Phi(u), \hspace{1cm} H_{j}(u)=(-1)^j (\phi(u))^{-1} \frac{d^j}{%
d u^j} \phi(u), \hspace{0.4cm} j=0,1,\dots,
\end{equation*}
$\phi(\cdot)$, $\Phi(\cdot)$ denote the standard Gaussian density and
distribution functions, respectively; for example: 
\begin{equation*}
H_0(u)=1, \hspace{0.3cm} H_1(u)=u, \hspace{0.3cm} H_2(u)=u^2-1, \hspace{0.3cm%
} H_3(u)=u^3-3u,\dots
\end{equation*}

The GKF states that the expected EPC of the excursion sets of a smooth,
centred, unit variance, Gaussian random fields $f:\mathcal{M} \to \mathbb{R}$
is 
\begin{align}  \label{GKF1}
\mathbb{E}[ \chi(A_{u}(f; {\mathcal{M}}))]=\sum_{j=0}^{\text{dim} (\mathcal{M%
})} {\mathcal{L}}^{f}_j({\mathcal{M}}) {\rho}_j(u).
\end{align}

While the GKF yields a precise expression for the expected value of the EPC
of excursion sets of smooth Gaussian processes, the analysis of higher
moments, and, in particular, of the variance, is still open. The latter
question is of both theoretical and applied interest; for instance, in the
recent paper \cite{adlerkou} five different methods are suggested to
estimate numerically the covariance matrix of the EPC characteristic for the
joint excursion sets at various thresholds. These results were subsequently
exploited to approximate excursion probabilities, the so-called
Euler-Poincar\'e heuristic \cite{adlerstflour} Section 5.1.

In this paper, we establish analytic formulae for the covariance of the EPC
characteristic of excursion sets at different thresholds, focussing on an
important class of fields: Gaussian spherical harmonics. We establish a
rather simple expression which seems to be closely related to a second-order
Gaussian Kinematic formula, in a sense to be made clear below. More
precisely, consider the Laplace equation 
\begin{equation*}
\Delta _{\mathcal{S}^2}f_{\ell}-\lambda_{\ell} f_{\ell}=0, \hspace{1cm}
f_{\ell}: {\mathcal{S}}^2 \to \mathbb{\mathbb{R}},
\end{equation*}
where $\Delta _{\mathcal{S}^2}$ is the Laplace-Beltrami operator on $%
\mathcal{S}^2$ and $\lambda_{\ell}=-\ell(\ell+1)$, $\ell=0,1,2,\dots$. For a
given eigenvalue $\lambda_{\ell}$, the corresponding eigenspace is the $(2
\ell+1)$-dimensional space %${\cal L}_{\ell}$!!! 
of spherical harmonics of degree $\ell$; we can choose an arbitrary $L^{2}$%
-orthonormal basis $\left\{ Y_{\mathbb{\ell }m}(.)\right\} _{m=-\ell, \dots,
\ell }$, and consider random eigenfunctions of the form 
\begin{equation*}
f_{\ell }(x)=\frac{1}{\sqrt{2\ell +1}}\sum_{m=-\ell }^{\ell }a_{\ell
m}Y_{\ell m}(x),
\end{equation*}
where the coefficients $\left\{ a_{\mathbb{\ell }m}\right\}$ are
independent, standard Gaussian variables. The law of $f_{\ell}$ is invariant
w.r.t. the choice of a $L^2$-orthonormal basis $\{Y_{\ell m}\}$. The random
fields $\{f_{\ell }(x), \; x\in \mathcal{S}^2\}$ are centred, Gaussian and
isotropic, meaning that the probability laws of $f_{\ell }(\cdot )$ and $%
f_{\ell }(g\cdot )$ are the same for any rotation $g\in SO(3)$. From the
addition theorem for spherical harmonics (\cite{AAR} Theorem 9.6.3) the
covariance function is given by 
\begin{equation*}
\mathbb{E}[f_{\ell }(x)f_{\ell }(y)]=P_{\ell }(\cos d(x,y)),
\end{equation*}%
where $P_{\ell }$ are the Legendre polynomials and $d(x,y)$ is the spherical
geodesic distance between $x$ and $y$. An application of the GKF \eqref{GKF1}
gives in these circumstances: 
\begin{align}  \label{expect}
\mathbb{E} [\chi (A_{u}(f_{\ell }; {\mathcal{S}}^2))] =\frac{\sqrt{2}}{ 
\sqrt{\pi}} \exp\{-u^2/2\} u \frac{\ell (\ell+1)}{2}+2 [1-\Phi(u)],
\end{align}
for a proof of formula \eqref{expect} see, for example, \cite{chengxiao_a}
Lemma 3.5 or \cite{MV} Corollary 5. Note that, as $u \to -\infty$, the right
hand side of \eqref{expect} yields the Euler-Poincar\'{e} characteristic of
the two-dimensional sphere i.e. 
\begin{equation*}
\lim_{u \to -\infty} \mathbb{E}[\chi (A_{u}(f_{\ell }; {\mathcal{S}}^2)) ]=2.
\end{equation*}
The analysis of spherical Gaussian eigenfunctions is motivated by
applications arising mainly from Mathematical Physics and Cosmology. In
particular, Gaussian eigenfunction have been conjectured \cite{Berry 1977}
to approximate deterministic eigenfunctions on generic billiards (surfaces
with smooth boundaries). On the other hand, spherical Gaussian
eigenfunctions are the Fourier components of isotropic spherical random
fields, and, because of this, have been deeply exploited in the analysis of
cosmological data, see for instance \cite{MaPeCUP}.

\noindent Let $I \subseteq \mathbb{R}$ be any interval in the real line and 
\begin{equation*}
A_{I}(f_{\ell }; \mathcal{S}^2)=f_{\ell}^{-1}(I)=\left\{ x \in \mathcal{S}%
^2: f_{\ell}(x)\in I \right\}.
\end{equation*}
Our principal result is the following:

\begin{theorem} \label{cov}  As $\ell \to \infty$, for every intervals $I_1, I_2 \subseteq \mathbb{R}$,
\begin{align} \label{11:14_1}
{\rm Cov}[\chi (A_{I_1}(f_{\ell }; {\cal S}^2) ) \,,\, \chi (A_{I_2}(f_{\ell }; {\cal S}^2) ) ]=\frac{\ell^3}{8 \pi} 
{\cal I}_1  {\cal I}_2 +O(\ell^{5/2}),
\end{align}
where  ${\cal I}_{i}$, for $i=1,2$, are given by
\begin{align*}
{\cal I}_{i}=\int_{I_i} p(t_i) d t_i, \hspace{1cm}  p(t)=  (-t^4+4 t^2 -1) e^{-\frac {t^2} 2 }.
\end{align*}
The constant involved in the $O(\cdot)$ notation is universal.
\end{theorem}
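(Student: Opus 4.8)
The plan is to begin from the Morse-theoretic identity for the Euler--Poincar\'e characteristic of an interval set. For a Morse function on the surface ${\cal S}^2$, $\chi(A_I(f_\ell;{\cal S}^2))=\mathcal{N}_0(I)-\mathcal{N}_1(I)+\mathcal{N}_2(I)$, where $\mathcal{N}_k(I)$ counts the critical points of $f_\ell$ of Morse index $k$ with critical value in $I$; since the index parity equals the sign of $\det\nabla^2 f_\ell$ at the critical point, this can be written, in the Kac--Rice sense, as
\begin{equation*}
\chi(A_I(f_\ell;{\cal S}^2))=\int_{{\cal S}^2}\mathbf{1}_{\{f_\ell(x)\in I\}}\,\delta(\nabla f_\ell(x))\,\det\nabla^2 f_\ell(x)\,dx .
\end{equation*}
The non-degeneracy of $f_\ell$ and the validity of Kac--Rice for this functional are what I would import from \cite{CMW}; taking expectations reproduces \eqref{expect}, so everything reduces to the second moment.

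Next I would express the second moment via the Kac--Rice formula as the double integral over ${\cal S}^2\times{\cal S}^2$ of the two-point correlation function
\begin{equation*}
K_2(x,y)=\mathbb{E}\big[\delta(\nabla f_\ell(x))\delta(\nabla f_\ell(y))\,\det\nabla^2 f_\ell(x)\det\nabla^2 f_\ell(y)\,\mathbf{1}_{\{f_\ell(x)\in I_1\}}\mathbf{1}_{\{f_\ell(y)\in I_2\}}\big],
\end{equation*}
which by isotropy depends only on $\phi=d(x,y)$, so that $\mathbb{E}[\chi(A_{I_1})\chi(A_{I_2})]=8\pi^2\int_0^\pi K_2(\phi)\sin\phi\,d\phi$. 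To compute $K_2(\phi)$ one uses that $(f_\ell(x),f_\ell(y),\nabla f_\ell(x),\nabla f_\ell(y),\nabla^2 f_\ell(x),\nabla^2 f_\ell(y))$ is a centred Gaussian vector whose covariance is built out of $P_\ell(\cos\phi)$ and its first four derivatives: condition on $\nabla f_\ell(x)=\nabla f_\ell(y)=0$ (the conditional means all vanish), integrate out the two Hessians by Wick's formula — they appear only polynomially, through their determinants — and what remains is $K_2(\phi)=p_{\nabla f_\ell(x),\nabla f_\ell(y)}(0,0)\int_{I_1}\int_{I_2}Q_\phi(t_1,t_2)\,\varphi_{\Sigma(\phi)}(t_1,t_2)\,dt_1\,dt_2$ with $\varphi_{\Sigma(\phi)}$ the conditional bivariate density of $(f_\ell(x),f_\ell(y))$ and $Q_\phi$ an explicit polynomial. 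Subtracting $\mathbb{E}[\chi(A_{I_1})]\mathbb{E}[\chi(A_{I_2})]=8\pi^2\int_0^\pi\widehat K_2\sin\phi\,d\phi$, where $\widehat K_2=\rho_1\rho_2$ is the product of the one-point densities, leaves ${\rm Cov}=8\pi^2\int_0^\pi\big(K_2(\phi)-\widehat K_2\big)\sin\phi\,d\phi$, the integral of the \emph{connected} two-point function.

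For the asymptotics I would rescale $\phi=\psi/\ell$ and insert Hilb's asymptotics $P_\ell(\cos(\psi/\ell))=J_0(\psi)+O(\ell^{-1})$ together with the corresponding expansions of its derivatives (and the complementary expansion near $\phi=\pi$); after extracting the appropriate powers of $\ell$, the matrices $\Sigma(\phi)$, ${\rm cov}(\nabla f_\ell(x),\nabla f_\ell(y))$ and the polynomial $Q_\phi$ turn into universal Bessel expressions in $\psi$. Splitting $[0,\pi]$ into a local window $\phi\lesssim\ell^{-1+\eta}$, the bulk $[\ell^{-1+\eta},\pi-\ell^{-1+\eta}]$, and an antipodal window, one checks that the bulk yields only $O(\ell^{5/2})$ (oscillation in $e^{\pm 2i\ell\phi}$ against decaying amplitudes), that the antipodal window is reduced to the first one by $P_\ell(-t)=(-1)^\ell P_\ell(t)$, and that the local window carries the $\ell^3$ term. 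In the local window the connected integrand \emph{factorizes in $(t_1,t_2)$} to leading order — equivalently, the projection of $\chi(A_I(f_\ell))-\mathbb{E}\chi(A_I(f_\ell))$ onto the dominant Wiener chaos is, up to $O(\ell^{5/2})$ in variance, proportional to $\mathcal{I}=\int_I p(t)\,dt$ times a single $I$-independent random variable, with $p(t)=-\big(H_4(t)+2H_2(t)\big)e^{-t^2/2}=(-t^4+4t^2-1)e^{-t^2/2}$ — the kind of factorization one expects from a second-order Gaussian Kinematic formula. Collecting the constants (the Jacobian $8\pi^2$, the Gaussian normalizations, and the $\psi$-integral of the Bessel combination) then produces $\frac{\ell^3}{8\pi}\mathcal{I}_1\mathcal{I}_2$.

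I expect the main obstacle to be precisely this last step: isolating the $\ell^3$ coefficient requires sharp, beyond-leading-order asymptotics of $P_\ell$ and its derivatives and careful bookkeeping of the many terms generated by the two Hessian determinants and the conditioning, and the remainder must genuinely be controlled at size $O(\ell^{5/2})$ \emph{uniformly} in $I_1,I_2$ — which forces one to majorize the rescaled connected two-point function by a fixed $\psi$-integrable function, so that the error constant is universal. A subsidiary difficulty is legitimizing the rescaling near the diagonal $\phi\to 0$ and near $\phi\to\pi$, i.e. verifying that the singular Kac--Rice densities produce integrable contributions there and that the would-be terms of order larger than $\ell^3$ cancel rather than merely being expected to.
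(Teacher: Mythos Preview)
Your starting framework is sound and matches the paper: Morse's identity reduces $\chi(A_I)$ to a signed count of critical points, and the sign identity lets you drop the absolute values in Kac--Rice, so the second moment becomes $\iint J_{2,\ell}\,dx\,dy$ with $J_{2,\ell}$ involving $\det(-\nabla^2 f_\ell)$ rather than $|\det\nabla^2 f_\ell|$. Your identification $p(t)=-(H_4(t)+2H_2(t))e^{-t^2/2}$ is also correct.

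The genuine gap is in the asymptotic localization, which you have backwards. The short-range window $d(x,y)<C/\ell$ contributes only $O(\ell^2)$ (this is Proposition~\ref{lemma2} in the paper, proved by the partitioning argument of \cite{CMW}); the $\ell^3$ term comes entirely from the \emph{long range} $d(x,y)>C/\ell$. Your oscillation argument for the bulk fails because the connected two-point function $J_{2,\ell}-\widehat J_{2,\ell}$ is, to leading perturbative order, \emph{quadratic} in $P_\ell(\cos\phi)$ and its derivatives, not linear: squares like $P_\ell^2$, $(P_\ell')^2$ carry no phase and do not integrate to something small. Concretely, $\int_0^\pi P_\ell(\cos\phi)^2\sin\phi\,d\phi=2/(2\ell+1)$ has its mass spread uniformly over $\phi\in[0,\pi]$; in your rescaled variable $\psi=\ell\phi$ the integrand $\psi J_0(\psi)^2$ does not decay, so cutting at $\psi\lesssim\ell^{\eta}$ loses almost all of it. The paper's route (via \cite{CMW}) is instead to Taylor-expand the conditional Gaussian expectation in the small off-diagonal entries $a_{i,\ell}(\phi)$ of the conditional covariance matrix and then integrate each perturbative term over the full long range; the $\ell^3$ emerges from the resulting $L^2$-type moments of $P_\ell$ and its derivatives, and the explicit evaluation of three surviving terms (your $p_1$, $g_2$, $g_3$) collapses to the product $\mathcal I_1\mathcal I_2$. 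If you want to salvage the Hilb/Bessel picture, you must keep the integral over the \emph{entire} rescaled range $\psi\in[C,\ell\pi]$ rather than a shrinking window.
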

In particular the variance for any given interval follows as an easy
corollary: 
\begin{corollary} \label{var}
For every interval $I \subseteq \mathbb{R}$, 
\begin{align} \label{11:14_2}
{\rm Var}[\chi (A_{I}(f_{\ell }; {\cal S}^2) )] =
\frac{\ell^3}{8 \pi }   {\cal I}^2+O(\ell^{5/2})
\end{align}
where 
$${\cal I}=\int_{I}  (-t^4+4 t^2 -1) e^{-\frac {t^2} 2 } d t.$$
\end{corollary}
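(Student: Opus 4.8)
The plan is to reduce the covariance to the fluctuations of a single explicit random variable, the centred squared $L^2$-norm $\|f_\ell\|_{L^2(\mathcal{S}^2)}^2-4\pi=\int_{\mathcal{S}^2}H_2(f_\ell(x))\,dx$, the intervals entering only through deterministic weights. The starting point, taken from \cite{CMW}, is the Morse-theoretic / Kac--Rice representation: $f_\ell$ is a.s.\ a Morse function and, on a surface, $(-1)^{\operatorname{ind}(x)}=\operatorname{sgn}\det\nabla^2 f_\ell(x)$ at a critical point $x$, so the Morse identity $\chi(\{f_\ell\ge u\})=\sum_{x:\,\nabla f_\ell(x)=0,\,f_\ell(x)\ge u}(-1)^{\operatorname{ind}(x)}$ becomes
\[
\chi(A_u(f_\ell;\mathcal{S}^2))=\int_{\mathcal{S}^2}\delta_0\bigl(\nabla f_\ell(x)\bigr)\,\det\bigl(\nabla^2 f_\ell(x)\bigr)\,\mathbf{1}_{\{f_\ell(x)\ge u\}}\,dx,
\]
with $\nabla^2$ the covariant Hessian and $\delta_0$ understood as an $L^2$ (approximate-identity) limit. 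I would then reduce to half-lines: since $\chi$ is finitely additive and a topological invariant, and the endpoints of $I_1,I_2$ are a.s.\ regular values of $f_\ell$ (so in/excluding them alters $A_I$ only by a null union of circles and leaves $\chi$ unchanged), one has $\chi(A_{[u_1,u_2]}(f_\ell))=\chi(A_{u_1}(f_\ell))-\chi(A_{u_2}(f_\ell))$ and $\chi(A_{(-\infty,u]}(f_\ell))=2-\chi(A_u(f_\ell))$; hence $\operatorname{Cov}[\chi(A_{I_1}),\chi(A_{I_2})]$ is a bilinear combination of the $\operatorname{Cov}[\chi(A_u(f_\ell)),\chi(A_v(f_\ell))]$, and it suffices to treat those.

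The second ingredient, again built on \cite{CMW}, is the Wiener--It\^o chaos expansion $\chi(A_u(f_\ell))=\sum_{q\ge 0}\chi_q[u]$, obtained by expanding the integrand above in Hermite polynomials of the Gaussian $2$-jet $(f_\ell(x),\nabla f_\ell(x),\nabla^2 f_\ell(x))$ and using $\operatorname{tr}\nabla^2 f_\ell=\Delta_{\mathcal{S}^2}f_\ell=-\ell(\ell+1)f_\ell$ to write $\det\nabla^2 f_\ell=\tfrac{\ell^2(\ell+1)^2}{4}f_\ell^2-A^2-B^2$, where the traceless-Hessian components $A,B$ are independent of $f_\ell$. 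The facts I would use are: (i) $\chi_1[u]\equiv 0$; (ii) $\chi_2[u]=\beta_\ell(u)\bigl(\|f_\ell\|_{L^2(\mathcal{S}^2)}^2-4\pi\bigr)$ with
\[
\beta_\ell(u)=\frac{\ell(\ell+1)}{8\sqrt{2}\,\pi^{3/2}}\,u(u^2-1)e^{-u^2/2}+O(\ell);
\]
(iii) $\sum_{q\ge 3}\operatorname{Var}\chi_q[u]=O(\ell^{5/2})$, uniformly for $u$ on compacts. The vanishing in (i) and the $\bigl(\|f_\ell\|^2-4\pi\bigr)$-proportionality in (ii) are consequences of $SO(3)$-invariance alone: $\chi(A_u(f_\ell))$ is invariant, so each $\chi_q[u]$ lies in the invariant part of the $q$-th chaos, which is trivial for $q=1$ (it would be a multiple of $\int_{\mathcal{S}^2}f_\ell=0$) and one-dimensional, spanned by $\|f_\ell\|^2-4\pi$, for $q=2$ — equivalently, every invariant quadratic jet-functional collapses to a multiple of $\|f_\ell\|^2$ via Green's identities and $\Delta f_\ell=-\ell(\ell+1)f_\ell$ (e.g.\ $\int|\nabla f_\ell|^2=\ell(\ell+1)\|f_\ell\|^2$, $\int|\nabla^2 f_\ell|_{\mathrm{HS}}^2=\ell(\ell+1)(\ell(\ell+1)-1)\|f_\ell\|^2$). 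The \emph{value} of $\beta_\ell(u)$ results from combining the three sources of second-chaos mass (from the $f_\ell^2$ term, from $A^2+B^2$, and from the second-chaos component of $\delta_0(\nabla f_\ell)$), whereupon the non-elementary $(1-\Phi(u))$-terms cancel; in (iii) the largest contribution is $q=3$, bounded via $\int_{-1}^1|P_\ell(t)|^3\,dt=O(\ell^{-3/2})$ (Hilb's asymptotics), with $q\ge 4$ giving only $O(\ell^2\log\ell)$.

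Granting (i)--(iii), the computation is short. Since $\tfrac{d}{du}\bigl[u(u^2-1)e^{-u^2/2}\bigr]=(-u^4+4u^2-1)e^{-u^2/2}=p(u)$, the second-chaos projection of $\chi(A_{[u_1,u_2]}(f_\ell))$ is $\bigl(\beta_\ell(u_1)-\beta_\ell(u_2)\bigr)(\|f_\ell\|^2-4\pi)$ with $\beta_\ell(u_1)-\beta_\ell(u_2)=-\frac{\ell(\ell+1)}{8\sqrt{2}\,\pi^{3/2}}\int_{u_1}^{u_2}p(t)\,dt+O(\ell)=-\frac{\ell(\ell+1)}{8\sqrt{2}\,\pi^{3/2}}\,\mathcal{I}_{[u_1,u_2]}+O(\ell)$, the analogous identity holding for half-lines and, with $\mathcal{I}=0$, for $I=\mathbb{R}$. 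By orthogonality of chaoses together with (i) and (iii), and the elementary evaluation $\operatorname{Var}\bigl(\|f_\ell\|_{L^2(\mathcal{S}^2)}^2\bigr)=2\int_{\mathcal{S}^2\times\mathcal{S}^2}P_\ell(\cos d(x,y))^2\,dx\,dy=16\pi^2\int_{-1}^1 P_\ell(t)^2\,dt=\tfrac{32\pi^2}{2\ell+1}$, one obtains
\[
\operatorname{Cov}[\chi(A_{I_1}),\chi(A_{I_2})]=\frac{(\ell(\ell+1))^2}{128\pi^3}\,\mathcal{I}_{I_1}\mathcal{I}_{I_2}\cdot\frac{32\pi^2}{2\ell+1}+O(\ell^{5/2})=\frac{\ell^3}{8\pi}\,\mathcal{I}_{I_1}\mathcal{I}_{I_2}+O(\ell^{5/2}),
\]
which is \eqref{11:14_1}; the error constant is universal because $|\mathcal{I}_{I_i}|\le\int_{\mathbb{R}}|p|$ for every interval. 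Corollary~\ref{var} is the case $I_1=I_2=I$.

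The main obstacle is fact (ii) — the exact determination of $\beta_\ell(u)$: carrying the Hermite expansion of $\delta_0(\nabla f_\ell)\det(\nabla^2 f_\ell)\mathbf{1}_{\{f_\ell\ge u\}}$ to second order while retaining all three contributions, and verifying the cancellation that turns the error-function pieces into the bare polynomial $u(u^2-1)e^{-u^2/2}$, equivalently $p(t)=-\bigl(H_4(t)+2H_2(t)\bigr)e^{-t^2/2}$. This is precisely the input we draw from \cite{CMW}, being the computation behind the single-threshold variance; once it is available, the passage to the covariance at different thresholds is just the additivity-and-orthogonality argument above, and the product form $\mathcal{I}_{I_1}\mathcal{I}_{I_2}$ merely records that, to leading order, all these fluctuations are slaved to $\|f_\ell\|_{L^2(\mathcal{S}^2)}^2-4\pi$ — the ``second-order Gaussian Kinematic formula'' feature alluded to in the introduction.
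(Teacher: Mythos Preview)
Your argument takes a genuinely different route from the paper. The paper obtains Corollary~\ref{var} as the diagonal case $I_1=I_2$ of Theorem~\ref{cov}, and proves the latter by a direct Kac--Rice second-moment computation: the signed two-point kernel $J_{2,\ell}$ of Proposition~\ref{lemma2} is Taylor-expanded in the small off-diagonal entries $\mathbf{a}_\ell(\phi)$ of the conditional Hessian covariance (Proposition~\ref{lemma3}), and the three surviving terms are then simplified algebraically to the product $\mathcal{I}_1\mathcal{I}_2$. You instead pass through the Wiener--It\^o chaos expansion of $\chi(A_u)$, arguing that the first chaos vanishes by $SO(3)$-invariance, the second is $\beta_\ell(u)\bigl(\|f_\ell\|^2_{L^2}-4\pi\bigr)$, and chaoses $q\ge 3$ together contribute $O(\ell^{5/2})$; the factorised leading term and the perfect asymptotic correlation then come for free, and your interval-to-half-line reduction via $p(u)=\tfrac{d}{du}\bigl[u(u^2-1)e^{-u^2/2}\bigr]$ is clean. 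The trade-off is that your facts (ii) and (iii) are not actually in \cite{CMW} in the form you invoke: \cite{CMW} treats the \emph{absolute} number of critical values via Kac--Rice and never computes your $\beta_\ell(u)$ nor bounds a chaos tail for $\chi(A_u)$. Your invariance argument pins down the \emph{shape} of $\chi_2[u]$, but the numerical value of $\beta_\ell(u)$ and the $O(\ell^{5/2})$ remainder still require analysis of essentially the same weight as Proposition~\ref{lemma3}; so the outline is sound and conceptually more transparent, but it defers rather than bypasses the hard analytic step.
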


\noindent Note that the asymptotic covariance in \eqref{11:14_1} can be
positive, negative or null depending on the choice of the intervals $I_{1}$
and $I_{2}$ (see Figure \ref{fig1}). From \eqref{11:14_1} and \eqref{11:14_2}
it follows also that, for every intervals $I_{1},I_{2}\subseteq \mathbb{R}$
such that the corresponding variances do not vanish, as $\ell $ goes to
infinity, $\chi (A_{I_{1}}(f_{\ell };\mathcal{S}^{2}))$ and $\chi
(A_{I_{2}}(f_{\ell };\mathcal{S}^{2}))$ are asymptotically perfectly
(positively or negatively) correlated, i.e. 
\begin{corollary} For all intervals $I_1,I_2$ such that 
$$
{\rm Var}[\chi (A_{I_1}(f_{\ell }; {\cal S}^2) )] , {\rm Var}[\chi (A_{I_2}(f_{\ell }; {\cal S}^2) )] \ne 0 
,$$
 as $\ell \to \infty$,
$$
 |{\rm Corr}[\chi (A_{I_1}(f_{\ell }; {\cal S}^2) ) \,,\, \chi (A_{I_2}(f_{\ell }; {\cal S}^2) ) ] |=1+O(\ell^{-1/2}).
$$ 
\end{corollary}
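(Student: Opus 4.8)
The plan is to deduce this as a direct formal consequence of Theorem~\ref{cov} and Corollary~\ref{var}; no new estimate is needed. Throughout, fix the intervals $I_1,I_2$ and, for brevity, suppress the dependence on $f_\ell$ and ${\cal S}^2$, writing $\chi(A_{I_i})$ for $\chi(A_{I_i}(f_\ell;{\cal S}^2))$. First I would record that the hypothesis that the variances do not vanish is, in the regime governed by Theorem~\ref{cov}, exactly the non-degeneracy condition ${\cal I}_1\neq 0$ and ${\cal I}_2\neq 0$: by Corollary~\ref{var} one has ${\rm Var}[\chi(A_{I_i})]=\frac{\ell^3}{8\pi}{\cal I}_i^2+O(\ell^{5/2})$, so if ${\cal I}_i=0$ the variance is only known to be $O(\ell^{5/2})$ and the leading-order asymptotics carry no information, whereas if ${\cal I}_i\neq 0$ then for all sufficiently large $\ell$ the variance is strictly positive, of exact order $\ell^3$, and the correlation is well defined. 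We therefore work under the assumption ${\cal I}_1,{\cal I}_2\neq 0$.

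The main (and only) computation is then to square the correlation, which conveniently removes the sign. Squaring the expansion of Theorem~\ref{cov}, multiplying the two expansions of Corollary~\ref{var}, and using $\ell^3\cdot\ell^{5/2}=\ell^{11/2}$ (which dominates the $O(\ell^5)$ cross terms), one gets
\begin{align*}
\big({\rm Cov}[\chi(A_{I_1}),\chi(A_{I_2})]\big)^2 &= \frac{\ell^6}{64\pi^2}\,{\cal I}_1^2{\cal I}_2^2+O(\ell^{11/2}),\\
{\rm Var}[\chi(A_{I_1})]\cdot{\rm Var}[\chi(A_{I_2})] &= \frac{\ell^6}{64\pi^2}\,{\cal I}_1^2{\cal I}_2^2+O(\ell^{11/2}).
\end{align*}
Since ${\cal I}_1^2{\cal I}_2^2$ is a fixed positive constant, dividing and factoring $\frac{\ell^6}{64\pi^2}{\cal I}_1^2{\cal I}_2^2$ out of numerator and denominator gives
\begin{align*}
{\rm Corr}[\chi(A_{I_1}),\chi(A_{I_2})]^2 = \frac{1+O(\ell^{-1/2})}{1+O(\ell^{-1/2})} = 1+O(\ell^{-1/2}),
\end{align*}
where the last equality uses that for $\ell$ large the denominator is bounded away from $0$, together with $\frac{1}{1+x}=1+O(x)$. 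Taking square roots, $\big|{\rm Corr}[\chi(A_{I_1}),\chi(A_{I_2})]\big|=\sqrt{1+O(\ell^{-1/2})}=1+O(\ell^{-1/2})$, which is the assertion; the implied constant is universal because those in Theorem~\ref{cov} and Corollary~\ref{var} are. (If desired, one may instead divide the unsquared expansions directly to see that ${\rm Corr}[\chi(A_{I_1}),\chi(A_{I_2})]={\rm sgn}({\cal I}_1{\cal I}_2)+O(\ell^{-1/2})$, which refines the ``positively or negatively'' dichotomy.)

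There is essentially no obstacle here: the statement is a purely algebraic corollary of the second-order asymptotics already in hand. The only points deserving attention are (i) recognising that ``${\rm Var}\neq 0$'' should be read as the non-degeneracy ${\cal I}_i\neq 0$ that makes the leading term dominate, and (ii) the careful propagation of the $O(\cdot)$ terms through a product and a quotient — in particular that $(1+O(\ell^{-1/2}))/(1+O(\ell^{-1/2}))=1+O(\ell^{-1/2})$ and $\sqrt{1+O(\ell^{-1/2})}=1+O(\ell^{-1/2})$, valid once the relevant quantities are bounded away from zero for large $\ell$.
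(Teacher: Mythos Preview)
Your argument is correct and follows exactly the route the paper intends: the corollary is stated without proof, introduced by ``From \eqref{11:14_1} and \eqref{11:14_2} it follows also that\ldots'', so the authors regard it as an immediate algebraic consequence of Theorem~\ref{cov} and Corollary~\ref{var}, which is precisely what you do.

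One small correction: your claim that ``the implied constant is universal'' is not right. After dividing numerator and denominator by $\frac{\ell^6}{64\pi^2}\,\mathcal{I}_1^2\mathcal{I}_2^2$, the resulting $O(\ell^{-1/2})$ carries a factor of $(\mathcal{I}_1^2\mathcal{I}_2^2)^{-1}$, so the constant depends on the intervals $I_1,I_2$ through $\mathcal{I}_1,\mathcal{I}_2$. The paper's statement does not assert universality here, so simply drop that sentence.
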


A similar form of degeneracy was earlier observed for level curves in
\noindent \cite{Wsurvey}. From Theorem \ref{cov} we also have  the
following corollary for
half-intervals $I_{1}=[u_{1},\infty )$ and $I_{2}=[u_{2},\infty )$ (see also Figure \ref{fig11} and Figure \ref{fig2}): 
\begin{corollary} \label{covcor} As $\ell \to \infty$, for $u_1, u_2 \in \mathbb{R}$,
\begin{align} \label{11:14_3}
{\rm Cov}[\chi (A_{u_1}(f_{\ell }; {\cal S}^2) ) \, ,\,  \chi (A_{u_2}(f_{\ell }; {\cal S}^2) ) ]=\frac{\ell^3}{8 \pi} 
 u_1   u_2  (u_1^2-1) (u_2^2-1)   e^{-\frac{u_2^2}{2}}   e^{-\frac{u_1^2}{2}} +O(\ell^{5/2}).
\end{align}
 In particular, if $u_1=u_2=u$, we can present an analytic expression for the variance:
\begin{align} \label{11:14_4}
{\rm Var}[\chi (A_{u}(f_{\ell }; {\cal S}^2) )]
&=\frac{\ell^3 }{8 \pi }  [H_3(u)+2H_1(u) ]^2 e^{-u^2}  +O(\ell^{5/2}),
\end{align}
where $H_q(\cdot)$ are the Hermite polynomial of order $q$.
\end{corollary}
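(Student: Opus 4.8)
The plan is to derive Corollary \ref{covcor} as an immediate specialization of Theorem \ref{cov}. First I would note that the half-line $[u,\infty)$ is an interval and that, by the very definitions of $A_u$ and $A_I$,
\[
A_{u}(f_\ell;{\cal S}^2)=A_{[u,\infty)}(f_\ell;{\cal S}^2),
\]
so that \eqref{11:14_1} applies verbatim with $I_i=[u_i,\infty)$, $i=1,2$. The only thing left to do is to evaluate the one-dimensional integral
\[
{\cal I}_i=\int_{u_i}^{\infty}(-t^4+4t^2-1)\,e^{-t^2/2}\,dt .
\]

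The key step is to exhibit an explicit primitive of the integrand. A direct differentiation gives
\[
\frac{d}{dt}\Big[(t^3-t)\,e^{-t^2/2}\Big]=(3t^2-1)\,e^{-t^2/2}-(t^4-t^2)\,e^{-t^2/2}=(-t^4+4t^2-1)\,e^{-t^2/2},
\]
and since $(t^3-t)e^{-t^2/2}\to 0$ as $t\to+\infty$, the fundamental theorem of calculus yields
\[
{\cal I}_i=\Big[(t^3-t)\,e^{-t^2/2}\Big]_{t=u_i}^{t=\infty}=-\,u_i(u_i^2-1)\,e^{-u_i^2/2}.
\]
Plugging this into \eqref{11:14_1}, the two minus signs cancel in forming the product ${\cal I}_1{\cal I}_2$, which produces exactly \eqref{11:14_3}; the universality of the constant in the $O(\ell^{5/2})$ term is inherited unchanged from Theorem \ref{cov}.

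Finally, for the variance formula \eqref{11:14_4} I would set $u_1=u_2=u$ in \eqref{11:14_3} (equivalently, apply Corollary \ref{var} with $I=[u,\infty)$), obtaining $\frac{\ell^3}{8\pi}\,u^2(u^2-1)^2e^{-u^2}+O(\ell^{5/2})$, and then recast the polynomial prefactor in Hermite form: since $H_1(u)=u$ and $H_3(u)=u^3-3u$, one has $H_3(u)+2H_1(u)=u^3-u=u(u^2-1)$, whence $u^2(u^2-1)^2=[H_3(u)+2H_1(u)]^2$. There is essentially no obstacle in this argument --- it is a one-line consequence of Theorem \ref{cov} plus an elementary Gaussian-weighted polynomial integral; the only point requiring a little care is the sign bookkeeping when passing from ${\cal I}_i$ to the product ${\cal I}_1{\cal I}_2$, and checking that the boundary contribution at $+\infty$ indeed vanishes.
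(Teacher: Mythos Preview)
Your proof is correct and follows essentially the same route as the paper: specialize Theorem \ref{cov} to $I_i=[u_i,\infty)$, evaluate the resulting Gaussian-weighted polynomial integral, and then rewrite $u(u^2-1)=H_3(u)+2H_1(u)$. The paper simply asserts the value of ${\cal I}_1{\cal I}_2$ without displaying the antiderivative $(t^3-t)e^{-t^2/2}$, so your version is in fact slightly more explicit.
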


\begin{figure}[h]
\centering
\subfigure[positive range]{\includegraphics[width=8cm,
height=7.5cm]{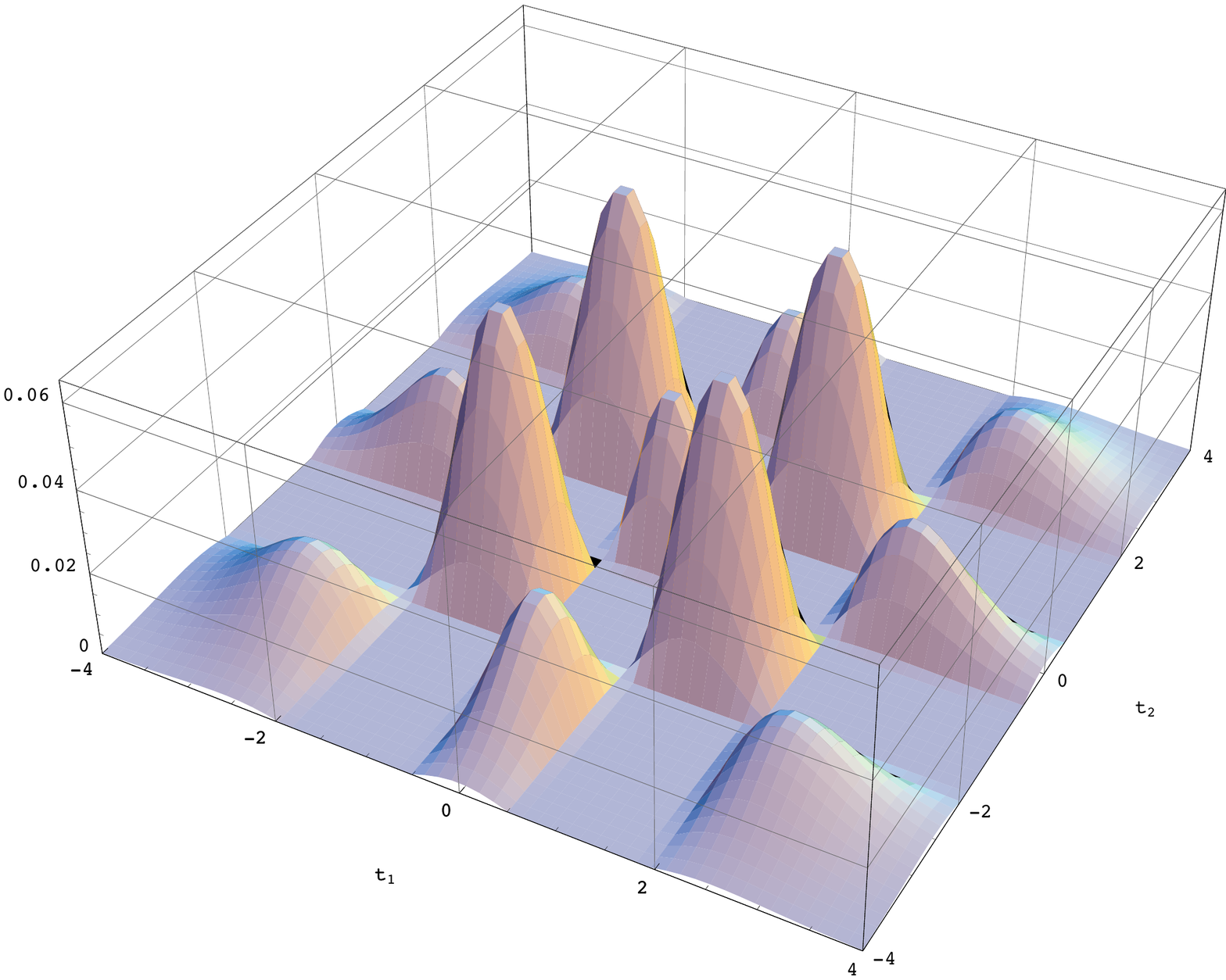} } 
\subfigure[negative
range]{\includegraphics[width=8cm, height=7.5cm]{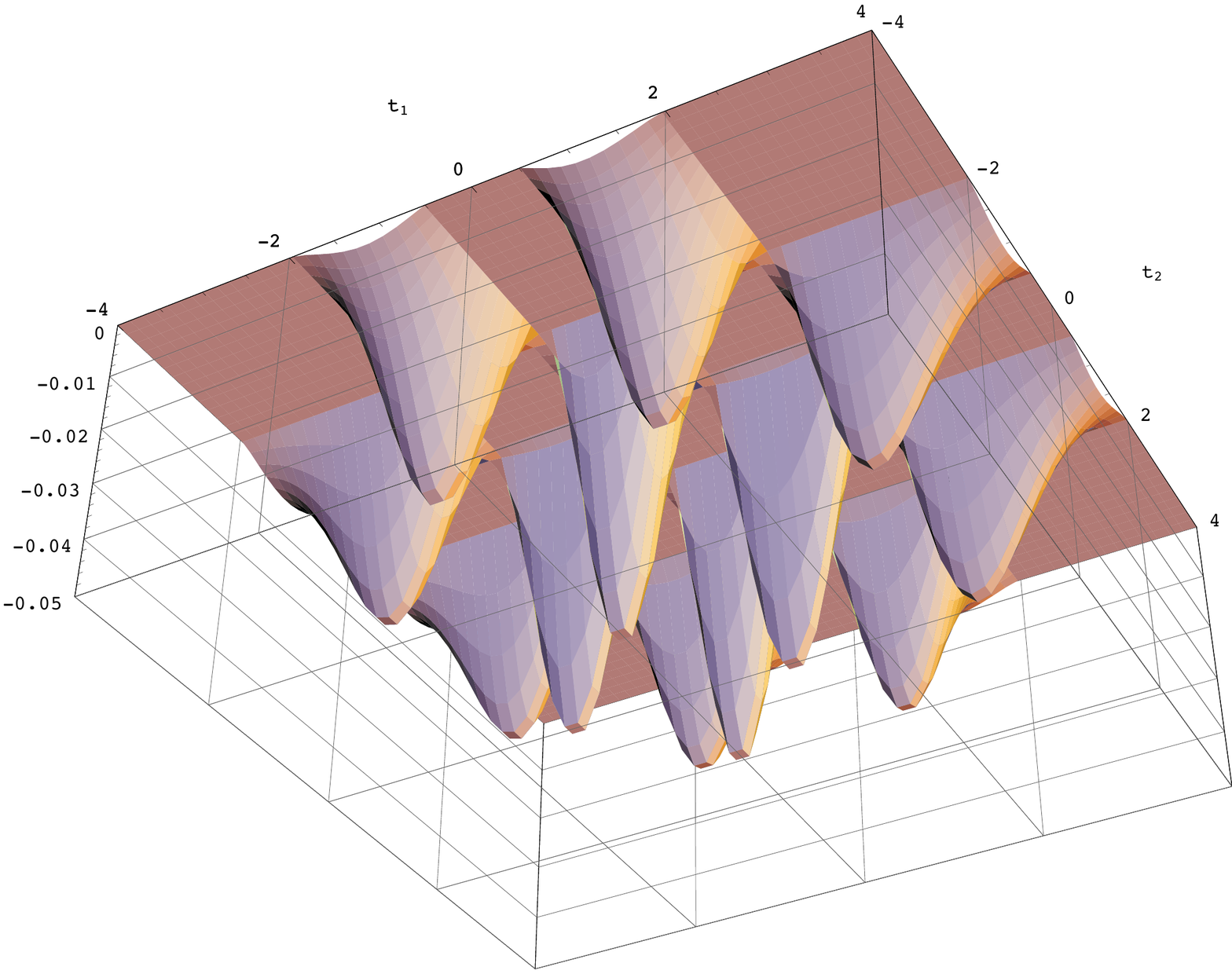} }
\caption{$\frac{1}{8 \protect\pi} (-t_1^4+4 t_1^2-1) (-t_2^4+4 t_2^2-1) e^{-%
\frac{t_1^2}{2}} e^{-\frac{t_2^2}{2}} $}
\label{fig1}
\end{figure}

\begin{figure}[h]
\centering
\subfigure[positive range]  {\includegraphics[width=8cm,
height=7.5cm]{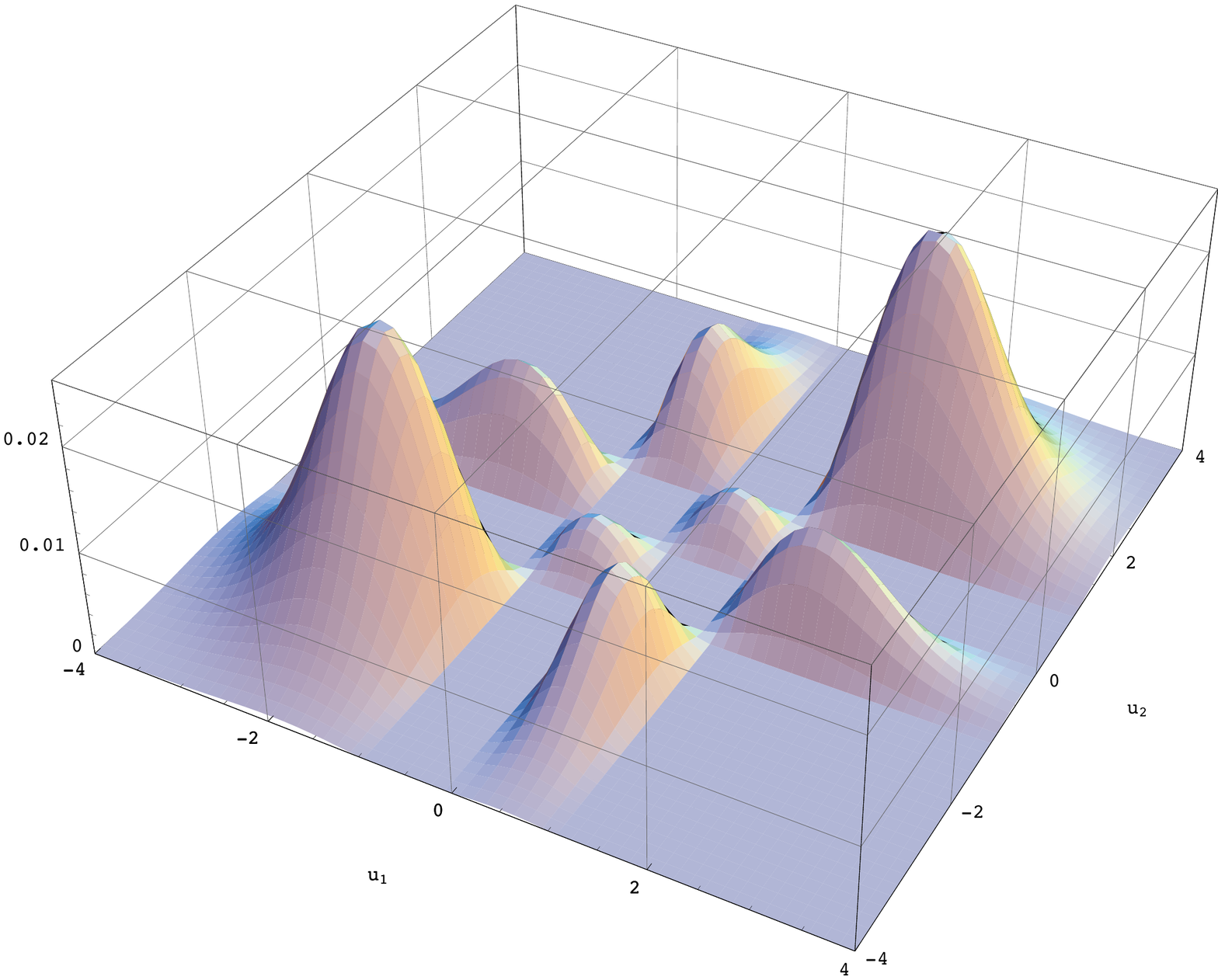} } 
\subfigure[negative
range]{\includegraphics[width=8cm, height=7.5cm]{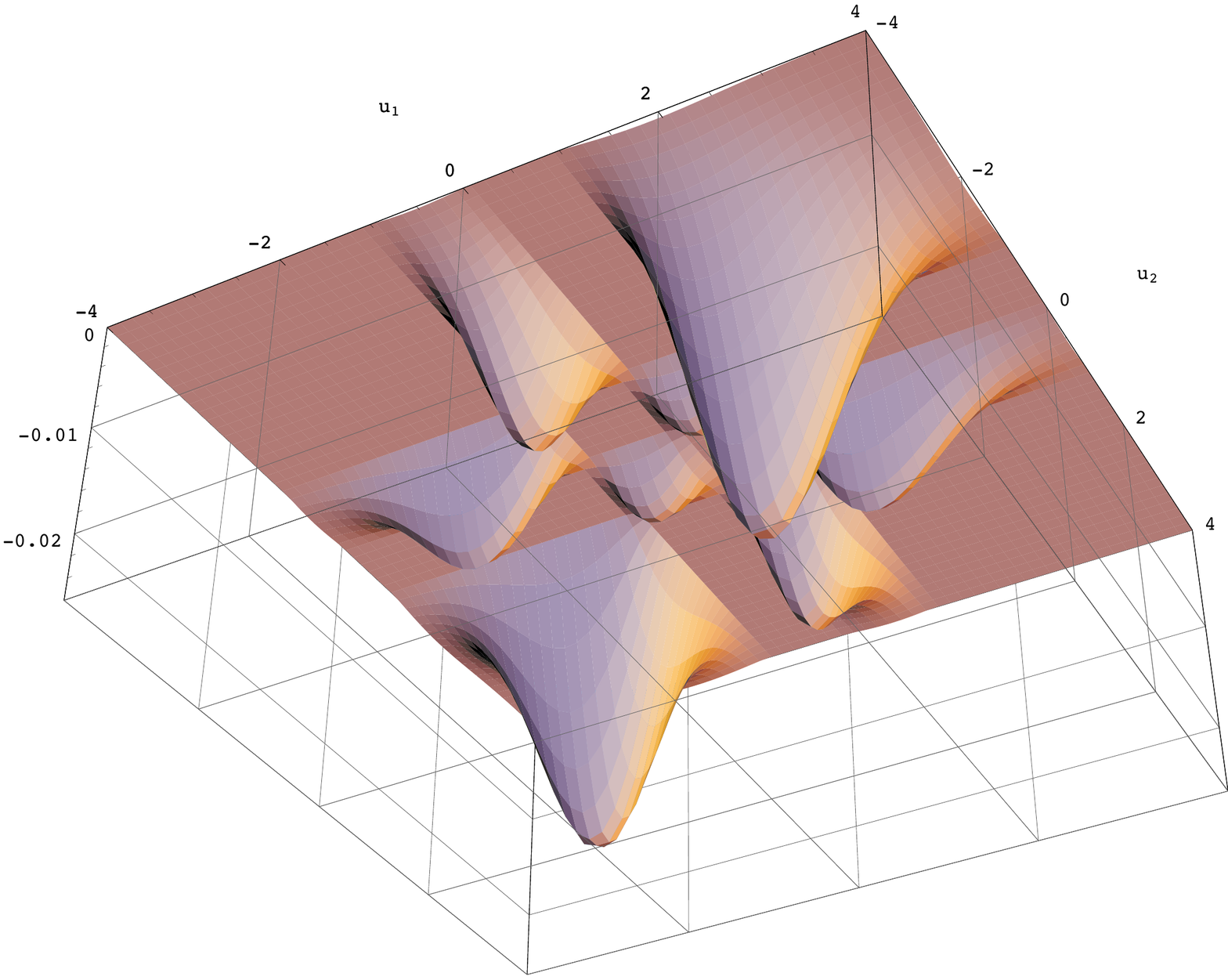} }
\caption{$\frac{1}{8 \protect\pi} u_1 u_2 (u_1^2-1) (u_2^2-1) e^{-\frac{u_1^2%
}{2}} e^{-\frac{u_2^2}{2}} $}
\label{fig11}
\end{figure}

\begin{figure}[h]
\centering {\includegraphics[width=8cm, height=5.8cm]{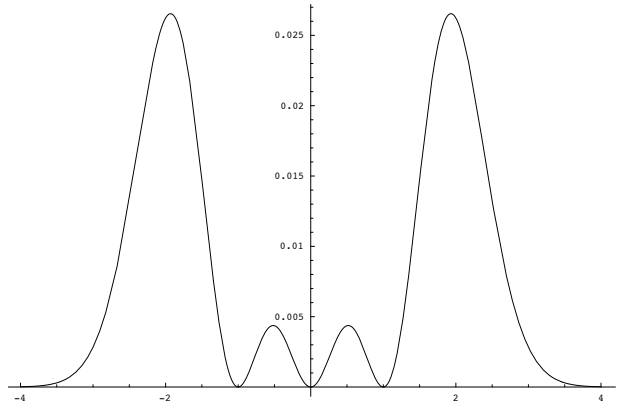}}
\caption{$\frac{1}{8 \protect\pi} [H_3(u)+2H_1(u) ]^2 e^{-u^2}$}
\label{fig2}
\end{figure}

As explained in Section \ref{proof}, the proof of Theorem \ref{cov} follows
from Morse theory and the analysis of asymptotic fluctuations of critical
points of random eigenfunctions \cite{CMW}. The expressions \eqref{11:14_1}-%
\eqref{11:14_4} are supported by extensive numerics \cite{CFMW}. 
\begin{remark}
Building upon previous results \cite{Wsurvey}, we are now able to present a full characterisation for the asymptotic behaviour for the variance of the  three Lipschitz-Killing curvatures ${\cal L}_i$, $i=0,1,2$, for the excursion sets of random spherical eigenfunctions on ${\cal S}^2$. In this setting these three LKC's correspond, respectively, to the EPC  (${\cal L}_0$), half the length of level curves (${\cal L}_1$), and the excursion area (${\cal L}_2$). Indeed, it was shown  \cite{DI} that the variance of the excursion area for spherical Gaussian eigenfunctions satisfies 
\begin{align} \label{11:21_1}
\lim_{\ell \to \infty} \ell \; {\rm Var}[{\cal L}_2(A_{u}(f_{\ell }; {\cal S}^2))]= u^2 \phi^2(u)=[ H_1(u) + H'_0(u)]^2 \phi(u)^2.
\end{align}
On the other hand \cite{Wsurvey} formula (18) (see also \cite{Wig}) asserts (in a slightly different form) that, for the variance of the boundary length of excursion sets, the following result holds
\begin{align} \label{11:21_2}
\lim_{\ell \to \infty} \ell^{-1} {\rm Var}[{\cal L}_1(A_{u}(f_{\ell }; {\cal S}^2))] = {\rm const} \times   u^4 \phi^2(u)={\rm const} \times [H_2(u)+H'_1(u) ]^2 \phi^2(u).
\end{align}
Likewise the asymptotic variance of the Euler-Poincar\'e characteristic, derived in Corollary \ref{covcor}, may be written as 
\begin{align} \label{11:21_3}
\lim_{\ell \to \infty} \ell^{-3} {\rm Var}[{\cal L}_0(A_{u}(f_{\ell }; {\cal S}^2))]=\frac{1}{4 } [u^3-u]^2 \phi^2(u)= \frac{1}{4 }  [H_3(u)+H'_2(u) ]^2 \phi^2(u).
\end{align}
We may unite the asymptotic expressions for the variance of the first three  Lipschitz-Killing curvatures in \eqref{11:21_1}, \eqref{11:21_2} and \eqref{11:21_3} into a single formula: 
\begin{align} \label{11:25}
 \lim_{\ell \to \infty} \ell^{2k-3} \times {\rm Var}[{\cal L}_k(A_u(f_{\ell }; {\cal S}^2) ] = const \times [H_{3-2k}(u)+ H'_{2-2k}(u)]^2 \; \phi^2(u), \hspace{0.5cm}  k=0,1,2.
 \end{align}
 A comparison of \eqref{11:25} with expressions  \eqref{GKF2}, \eqref{GKF1} and \eqref{expect} seems to suggest the existence of an (asymptotic) second order Gaussian Kinematic Formula for spherical Gaussian eigenfunctions. We leave  the investigation of the general validity of such en expression for higher dimensional spheres to future research. 
 \end{remark} 
\begin{remark}
 For all $u \in \mathbb{R}$, we have 
$$\frac{\chi(A_{u}(f_{\ell }; {\cal S}^2))}{\mathbb{E}[\chi(A_{u}(f_{\ell }; {\cal S}^2))]}-1=O_p\Big(	\frac{1}{\sqrt{\ell}} \Big),$$
with the usual convention  $X_n=O_p(a_n)$ meaning that the sequence $|X_n|/a_n$ is bounded in probability;
 i.e., in the high frequency limit $\ell \rightarrow \infty$, the ratio of the realised and expected value for the  EPC of the excursion will converge to unity in probability for all $u \in \mathbb{R}$.

 \end{remark} 

\section{Background on Morse theorem and (approximate) Kac-Rice formula}

\subsection{Morse theorem}

We start by recalling a general expression for the EPC by means of so-called
Morse Theorem (see \cite{adlertaylor} Section 9.3). Assuming that $\mathcal{M%
}$ is a $C^2$ manifold without boundary in $\mathbb{R}^N$ and that $h \in
C^2(\mathcal{M})$ is a Morse function on $\mathcal{M}$ (i.e. its Hessian is
non degenerate at the critical points), we have 
\begin{align}  \label{epc_morse}
\chi({\mathcal{M}})=\sum_{j=0}^{\text{dim}(\mathcal{M})} (-1)^j \mu_j({%
\mathcal{M}},h),
\end{align}
where $\mu_j(\mathcal{M},h)$ is the number of critical points of $h$ with
Morse index $j$, i.e., the Hessian of $h$ has $j$ negative eigenvalues. In
order to develop our results we will need to exploit \eqref{epc_morse} in
the case of excursion sets of spherical eigenfunctions; to this end, let us
first recall some basic differential geometry on $\mathcal{S}^2$. The metric
tensor on the tangent plane $T(\mathcal{S}^2)$ is given by 
\begin{equation*}
g(\theta, \varphi )=\left [ 
\begin{matrix}
1 & 0 \\ 
0 & \sin^2 \theta%
\end{matrix}
\right].
\end{equation*}
For $x=(\theta, \varphi) \in \mathcal{S}^2 \setminus \{N,S\}$ ($N,S$ are the
north and south poles i.e. $\theta=0$ and $\theta=\pi$ respectively), the
vectors 
\begin{equation*}
\vec{e}_{\theta}=\frac{\partial}{\partial \theta}, \hspace{2cm} \vec{e}%
_{\varphi}=\frac{1}{\sin \theta} \frac{\partial}{\partial \varphi},
\end{equation*}
constitute an orthonormal basis for $T_x(\mathcal{S}^2)$; in these
coordinates the gradient is given by $\nabla=(\frac{\partial}{\partial \theta%
}, \frac{1}{\sin \theta} \frac{\partial}{\partial \varphi})$. The Hessian of
a function $f \in C^2(\mathcal{S}^2)$ is the bilinear symmetric map from $%
C^1(T(\mathcal{S}^2)) \times C^1(T(\mathcal{S}^2))$ to $C^0(\mathcal{S}^2)$
defined by 
\begin{equation*}
\nabla^2 f(X,Y)=X Y f - \nabla_X Y f, \hspace{1cm} X,Y \in T(\mathcal{S}^2),
\end{equation*}
where $\nabla_X$ denotes Levi-Civita connection (see e.g. \cite{adlertaylor}
Chapter 7 for more discussion and details). For our computations to follow
we shall need the matrix-valued process $\nabla^2_E {f_{\ell}}(x)$ with
elements given by 
\begin{equation*}
\{\nabla^2_E {f_{\ell}}(x)\}_{a,b=\theta,\varphi}=\{(\nabla^2{f_{\ell}}(x))(%
\vec{e}_{a},\vec{e}_b)\}_{a,b=\theta,\varphi},
\end{equation*}
where $E=\{\vec{e}_{\theta}, \vec{e}_{\varphi}\}$. In coordinates as above,
this matrix can be expressed as 
\begin{align*}
\nabla^2_E f_{\ell}(x)&=\left[ 
\begin{matrix}
\frac{\partial^2}{\partial \theta^2}-\Gamma^{\theta}_{\theta \theta} \frac{%
\partial}{\partial \theta} - \Gamma^{\varphi}_{\theta \theta} \frac{\partial%
}{\partial \varphi} & \frac{1}{\sin \theta} [\frac{\partial^2}{\partial
\theta \partial \varphi }-\Gamma^{\varphi}_{ \varphi \theta} \frac{\partial}{%
\partial \varphi} - \Gamma^{\theta}_{\theta \varphi} \frac{\partial}{%
\partial \theta}] \\ 
\frac{1}{\sin\theta} [\frac{\partial^2}{\partial \theta \partial \varphi }%
-\Gamma^{\varphi}_{ \varphi \theta} \frac{\partial}{\partial \varphi} -
\Gamma^{\theta}_{\theta \varphi} \frac{\partial}{\partial \theta} ] & \frac{1%
}{\sin^2 \theta}[\frac{\partial^2}{\partial \varphi^2}-\Gamma^{\varphi}_{%
\varphi \varphi} \frac{\partial}{\partial \varphi} -
\Gamma^{\theta}_{\varphi \varphi } \frac{\partial}{\partial \theta} ]%
\end{matrix}
\right] \\
&=\left[ 
\begin{matrix}
\frac{\partial^2}{\partial \theta^2} & \frac{1}{\sin \theta}[\frac{\partial^2%
}{\partial \theta \partial \varphi}-\frac{\cos \theta}{\sin \theta} \frac{%
\partial}{\partial \varphi}] \\ 
\frac{1}{\sin \theta}[\frac{\partial^2}{\partial \theta \partial \varphi}-%
\frac{\cos \theta}{\sin \theta} \frac{\partial}{\partial \varphi} ] & \frac{1%
}{\sin^2 \theta}[\frac{\partial^2}{\partial \varphi^2}+\sin \theta \cos
\theta \frac{\partial}{\partial \theta}]%
\end{matrix}
\right].
\end{align*}
Here $\Gamma_{ab}^c$ are the usual Christoffel symbols, see e.g. \cite%
{chavel} Section I.1, which allow to compute the Levi-Civita connection: 
\begin{equation*}
\nabla_{\vec{e}_a}\vec{e}_b=\Gamma_{a b}^{\theta} \vec{e}_{\theta}+\Gamma_{a
b}^{\varphi} \vec{e}_{\varphi}, \hspace{0.7cm} a,b=\theta, \varphi.
\end{equation*}
More explicitly, Christoffel symbols for $\mathcal{S}^2$ are given by 
\begin{equation*}
\Gamma^{\theta}_{ \theta \varphi}= \Gamma^{\theta}_{\theta
\theta}=\Gamma^{\varphi}_{\varphi \varphi}=\Gamma^{\varphi}_{\theta
\theta}=0, \hspace{0.5cm}\Gamma_{\varphi \varphi}^{\theta}=-\sin \theta \cos
\theta, \hspace{0.5cm}\Gamma_{ \varphi \theta }^{\varphi}=\cot \theta.
\end{equation*}

We now state the Morse representation for the Euler characteristic of the
excursion set: let $\mathcal{M}$ and $h$ in \eqref{epc_morse} be $%
A_I(f_{\ell};\mathcal{S}^2)$ and $\left. f_\ell \right|_{{A_I(f_{\ell};%
\mathcal{S}^2)}}$ respectively, we have 
\begin{equation}  \label{morse}
\chi (A_{I}(f_{\ell }; {\mathcal{S}}^2) )=\sum_{j=0}^{2}(-1)^{j} \mu_j,
\end{equation}
where 
\begin{equation*}
\mu_j=\# \{x \in {\mathcal{S}}^2: f_{\ell }(x)\in I, \nabla f_{\ell }(x)=0, 
\text{Ind} (-\nabla^2_E f_{\ell }(x)) =j \},
\end{equation*}
$\text{Ind}(M)$ denoting the number of negative eigenvalues of a square
matrix $M$. More specifically, $\mu _{0}$ is the number of maxima, $\mu_{1}$
the number of saddles, and $\mu _{2}$ the number of minima in the excursion
region $A_{I}(f_{\ell }; \mathcal{S}^2)$.

\subsection{Kac-Rice formula}

The Kac-Rice formula is a standard tool (or meta-theorem) for expressing the
(factorial) moments of the zero crossings number of a Gaussian process in
terms of certain explicit integrals. In our case, we are interested in
counting the critical points of $f_{\ell }$, i.e. the zeros of the map $%
x\rightarrow \nabla f_{\ell }(x)$. Let $\mathcal{E}\subset \mathbb{R}^{n}$
be a nice Euclidean domain, and $g:\mathcal{E}\rightarrow \mathbb{R}^{n}$ a
centred Gaussian random field, a.s. smooth. Define the $2$-point correlation
function of critical points $K_{2}=K_{2;g}:\mathcal{E}^{2}\rightarrow 
\mathbb{R}$ 
\begin{equation*}
K_{2}(x,y)=\phi _{(\nabla g(x),\nabla g(y))}(\mathbf{0},\mathbf{0})\cdot 
\mathbb{E}[|\mathrm{det}\nabla ^{2}g(x)|\cdot |\mathrm{det}\nabla ^{2}g(y)|%
\big|\nabla g(x)=\nabla g(y)=\mathbf{0}],
\end{equation*}%
where $\phi _{(\nabla g(x),\nabla g(y))}$ is the Gaussian probability
density of $(\nabla g(x),\nabla g(y))\in \mathbb{R}^{2n}$. Let $\mathcal{N}%
^{c}(g)=\mathcal{N}^{c}(g,\mathcal{E})=\#\{x\in \mathcal{E}:\nabla g(x)=%
\mathbf{0}\}$; by virtue of \cite{azaiswschebor} Theorem 6.3, we have 
\begin{equation*}
\mathbb{E}[{\mathcal{N}}^{c}(g,{\mathcal{E}})\cdot ({\mathcal{N}}^{c}(g,{%
\mathcal{E}})-1)]=\iint_{\mathcal{E}\times \mathcal{E}}K_{2}(x,y)dxdy,
\end{equation*}%
provided that the Gaussian distribution of $\left( \nabla g(x),\nabla
g(y)\right) \in \mathbb{R}^{2n}$ is non-degenerate for all $(x,y)\in 
\mathcal{E}^{2}$, on the validity condition of Kac-Rice formula in the
Gaussian case, see \cite{azaiswschebor} Theorem 6.3 and Proposition 1.2, and 
\cite{rudnickwigmanyesha} Section 1.4. Moreover for $\mathcal{D}_{1},%
\mathcal{D}_{2}\subseteq \mathcal{E}$ two nice \textit{disjoint} domains,
under the same non-degeneracy assumptions for every $(x,y)\in \mathcal{D}%
_{1}\times \mathcal{D}_{2}$, we have 
\begin{equation*}
\mathbb{E}[{\mathcal{N}}^{c}(g,{\mathcal{D}}_{1})\cdot {\mathcal{N}}^{c}(g,{%
\mathcal{D}}_{2})]=\iint_{\mathcal{D}_{1}\times \mathcal{D}%
_{2}}K_{2}(x,y)dxdy.
\end{equation*}%
It is easy to adapt the definition of the $2$-point correlation function in 
order to investigate, for example, the maxima with values lying in an interval $%
I\subseteq \mathbb{R}$:   we re-define $K_{2}$ as 
\begin{align*}
& K_{2,0,0}(x,y;I,I)=\phi _{(\nabla g(x),\nabla g(y))}(\mathbf{0},\mathbf{0})
\\
& \hspace{-0.3cm}\times \mathbb{E}[|\mathrm{det}\nabla ^{2}g(x)|\cdot |%
\mathrm{det}\nabla ^{2}g(y)|\cdot 1\hspace{-0.27em}\mbox{\rm l}%
_{I}(g(x))\cdot 1\hspace{-0.27em}\mbox{\rm l}_{I}(g(y))\cdot 1\hspace{-0.27em%
}\mbox{\rm l}_{\{\text{Ind}(-\nabla ^{2}g(x))=0\}}\cdot 1\hspace{-0.27em}%
\mbox{\rm l}_{\{\text{Ind}(-\nabla ^{2}g(y))=0\}}\big|\nabla g(x)=\nabla
g(y)=\mathbf{0}],
\end{align*}%
where $1\hspace{-0.27em}\mbox{\rm l}_{I}$ is the characteristic function of $%
I$ on $\mathbb{R}$.

For the Kac-Rice formula on manifolds we refer to \cite{adlertaylor} Theorem
12.1.1, in particular, let 
\begin{equation*}
\mathcal{N}_{I}^{c}(f_{\ell })=\#\{x\in \mathcal{S}^{2}:\;f_{\ell }(x)\in
I,\nabla f_{\ell }(x)=0\}
\end{equation*}%
be the total number of critical points in $I$ of $\{f_{\ell }(x),x\in 
\mathcal{S}^{2}\}$; we have 
\begin{equation} \label{eq:Kac-Rice2ndmomgen}
\mathbb{E}[\mathcal{N}_{I}^{c}(f_{\ell })\cdot (\mathcal{N}_{I}^{c}(f_{\ell
})-1)]=\iint\limits_{\mathcal{S}^{2}\times \mathcal{S}^{2}}K_{2,\ell
}(x,y;I,I)dxdy,
\end{equation}
where 
\begin{align}
K_{2,\ell }(x,y;I,I)& =\phi _{(\nabla f_{\ell }(x),\nabla f_{\ell }(y))}(%
\mathbf{0},\mathbf{0})  \notag  \label{K2} \\
& \;\;\times \mathbb{E}[|\det \nabla _{E}^{2}f_{\ell }(x)|\cdot |\det \nabla
_{E}^{2}f_{\ell }(y)|\cdot 1\hspace{-0.27em}\mbox{\rm l}_{I}(f_{\ell
}(x))\cdot 1\hspace{-0.27em}\mbox{\rm l}_{I}(f_{\ell }(y))\big|\nabla
f_{\ell }(x)=\nabla f_{\ell }(y)=\mathbf{0}].
\end{align}%
One technical difficulty in working with the spherical Gaussian
eigenfunctions $f_{\ell }$ in \eqref{eq:Kac-Rice2ndmomgen} is related to the
fact that the Gaussian distribution of $(f_{\ell }(x),\nabla f_{\ell
}(x),\nabla _{E}^{2}f_{\ell }(x))$ is always degenerate. However, this issue
can be handled by writing $f_{\ell }$ as a linear combination of second
order derivatives, and thus reducing the dimension of the Gaussian vector
involved in the evaluation of $K_{2}$, see \cite{CMW}.

A much trickier issue arises when we need to validate a sufficient
non-degeneracy assumptions due to the technical difficulties of dealing with 
$10\times 10$ matrices depending on both $x$ and $y$ (and $\ell $).
Following \cite{CMW} and \cite{rudnickwigman}, we do not claim the (precise)
Kac-Rice formula \eqref{eq:Kac-Rice2ndmomgen} but rather an approximate
version, see \cite{CMW} formula (3.5), equivalent to %
\eqref{eq:Kac-Rice2ndmomgen} up to an admissible error.

First note that, by isotropy, $K_{2}(x,y)=K_{2}(d(x,y))$ depends only on the
(spherical) distance $d(x,y)=\arccos (\left\langle x,y\right\rangle )$
between $x$ and $y$. In view of this, we note that it is convenient to
perform our computations along a specific geodesic; in particular, we
constrain ourselves to the equatorial line $\theta_x=\theta_y=\pi/2$; it is
immediate to see that here the gradient and the Hessian are 
\begin{equation*}
\left.\nabla \right|_{\theta= \pi /2}=(\frac{\partial}{\partial \theta}, 
\frac{\partial}{\partial \varphi}), \hspace{1cm} \left. \nabla^2_E
\right|_{\theta= \pi /2}=\left[ 
\begin{matrix}
\frac{\partial^2}{\partial \theta^2} & \frac{\partial^2}{\partial \theta
\partial \varphi } \\ 
\frac{\partial^2}{\partial \theta \partial\varphi } & \frac{\partial^2}{%
\partial \varphi^2}%
\end{matrix}
\right].
\end{equation*}

The basic idea is to split the range of integration in %
\eqref{eq:Kac-Rice2ndmomgen} into two parts: the ``short range'' regime $%
d(x,y)<C/\ell $ and the ``long range'' regime $d(x,y)>C/\ell$, $C$ denoting
a sufficiently big positive constant. In the short range regime Kac-Rice
formula holds only approximately, but, by a partitioning argument inspired
from \cite{rudnickwigman} (see also \cite{Wig}), it is possible to prove
that its contribution is $O(\ell^{2})$. In the long range regime $%
d(x,y)>C/\ell$ the Kac-Rice formula is precise. The above yields 
\begin{align}  \label{A_K-R}
\mathbb{E }[\mathcal{N}_{I}^{c}(f_{\ell})\cdot (\mathcal{N}%
_{I}^{c}(f_{\ell})-1)] = \int_{d(x,y)>C/\ell} \iint_{I \times I} K_{2,\ell}
(x,y; t_1,t_2) d t_1 d t_2 d x d y+O(\ell^{2}),
\end{align}
where 
\begin{align}  \label{4:29}
&K_{2,\ell} (x,y; t_1,t_2) \\
&\;\;= \varphi_{x,y,\ell}(t_1,t_2,\mathbf{0},\mathbf{0}) \mathbb{E }[ |\det
\nabla_E^2 f_{\ell} (x)|\cdot |\det \nabla^2_E f_{\ell} (y)| \big| \nabla
f_{\ell}(x)=\nabla f_{\ell}(y)=\mathbf{0}, f_{\ell}(x)=t_1, f_{\ell}(y)=t_2],
\notag
\end{align}
and $\varphi_{x,y,\ell}$ is the density of the $6$-dimensional vector $%
(f_{\ell}(x),f_{\ell}(y),\nabla f_{\ell}(x), \nabla f_{\ell}(y))$. For
further details on the proof of \eqref{A_K-R}, see \cite{CMW} Section 3.4.1
and Section 3.4.2.

As it will become clear from the proof of Proposition \ref{lemma3} below, we
obtain a considerable simplification in our calculations since, during the
application of the (approximate) Kac-Rice formula for studying the variance
of the EPC, we can get rid of the absolute values in \eqref{4:29}; in fact,
for $g$ as before a smooth, centred Gaussian random field, we observe that (%
\cite{adlerstflour} Lemma 4.2.2) 
\begin{align*}
(-1)^{j}\;|\det \nabla _{E}^{2}g(x)|1\hspace{-0.27em}\mbox{\rm l}_{\{\text{%
Ind}(-\nabla _{E}^{2}g(x))=j\}}& =(-1)^{j}\;\text{sgn}(\det \nabla
_{E}^{2}g(x))(\det \nabla _{E}^{2}g(x))1\hspace{-0.27em}\mbox{\rm l}_{\{%
\text{Ind}(-\nabla _{E}^{2}g(x))=j\}} \\
& =\det (-\nabla _{E}^{2}g(x))1\hspace{-0.27em}\mbox{\rm l}_{\{\text{Ind}%
(-\nabla _{E}^{2}g(x))=j\}},
\end{align*}%
since $\text{sgn}(\det \nabla _{E}^{2}g(x))1\hspace{-0.27em}\mbox{\rm l}_{\{%
\text{Ind}(-\nabla _{E}^{2}g(x))=j\}}=(-1)^{j+1}$ and $-\det \nabla
_{E}^{2}g(x)=\det (-\nabla _{E}^{2}g(x))$. Hence 
\begin{equation} \label{simp}
\sum_{j=0}^{2}(-1)^{j}\;|\det \nabla _{E}^{2}g(x)|1\hspace{-0.27em}%
\mbox{\rm
l}_{\{\text{Ind}(-\nabla _{E}^{2}g(x))=j\}}=\det (-\nabla _{E}^{2}g(x)).
\end{equation}

\section{Proof of Theorem \protect\ref{cov}}

\label{proof} Let $I_i \subseteq \mathbb{R}$, $i=1,2$ be two interval in the
real line; in the argument to follow we shall adopt the following notation: 
\begin{align*}
\mu_{j;i}(f_{\ell })=\# \{x \in {\mathcal{S}}^2: f_{\ell }(x)\in I_i, \nabla
f_{\ell }(x)=0, \text{Ind}( - \nabla_E^2f_{\ell } (x) ) =j\}, \hspace{1cm}
j=0,1,2,\; i=1,2.
\end{align*}
Theorem \ref{cov} is a straightforward application of Proposition \ref%
{lemma2} and Proposition \ref{lemma3}. The first building block is the
approximate Kac-Rice formula for covariance computation: 
\begin{proposition} \label{lemma2} There exists a constant $C > 0$ sufficiently big, such that
\begin{align} \label{sim}
\sum_{j,k=0}^{2} (-1)^{j+k} \E[\mu_{j;1}(f_{\ell }) \mu_{k;2}(f_{\ell })]= \int_{d(x,y)>C/\ell} \iint_{I_1 \times I_2} {J}_{2,\ell} (x,y; t_1,t_2) d t_1 d t_2 d x d y+O(\ell^{2})
\end{align}
where
\begin{align} \label{J2}
 &{J}_{2,\ell} (x,y; t_1,t_2) \\
 &\;\;= \varphi_{x,y,\ell}(t_1,t_2,{\bf 0},{\bf 0}) \; \E [ \det(- \nabla_E^2 f_{\ell} (x))\cdot \det(- \nabla_E^2 f_{\ell} (y)) \big|
  \nabla f_{\ell}(x)=\nabla f_{\ell}(y)={\bf 0}, f_{\ell}(x)=t_1, f_{\ell}(y)=t_2]. \nonumber
\end{align}
\end{proposition}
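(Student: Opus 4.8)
The plan is to derive \eqref{sim} from the Morse representation \eqref{morse} together with the approximate Kac--Rice formula of \cite{CMW}; the one genuinely new ingredient is the bookkeeping of the Morse indices, which reduces to the algebraic identity \eqref{simp}. Writing $\chi(A_{I_i}(f_\ell;{\cal S}^2))=\sum_{j=0}^{2}(-1)^{j}\mu_{j;i}(f_\ell)$, $i=1,2$, and multiplying out,
\[
\chi(A_{I_1}(f_\ell;{\cal S}^2))\,\chi(A_{I_2}(f_\ell;{\cal S}^2))=\sum_{j,k=0}^{2}(-1)^{j+k}\,\mu_{j;1}(f_\ell)\,\mu_{k;2}(f_\ell),
\]
so the left-hand side of \eqref{sim} is $\E[\chi(A_{I_1})\,\chi(A_{I_2})]$. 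Expanding each $\mu_{j;i}(f_\ell)$ as a sum over the critical points of $f_\ell$, the product $\mu_{j;1}(f_\ell)\,\mu_{k;2}(f_\ell)$ counts ordered pairs $(x,y)$ of critical points with $f_\ell(x)\in I_1$, $f_\ell(y)\in I_2$ and prescribed indices $j,k$. I would split this sum into the diagonal $\{x=y\}$ and the off-diagonal $\{x\ne y\}$. On the diagonal only the terms with $j=k$ contribute, since a Morse critical point has a single index; summing them against $(-1)^{j+k}=1$ yields the expected number of critical points of $f_\ell$ with value in $I_1\cap I_2$, which is at most $\E[{\cal N}^c(f_\ell)]=O(\ell^2)$, where ${\cal N}^c(f_\ell)$ is the total number of critical points of $f_\ell$ on ${\cal S}^2$ (this is classical and in any case contained in \cite{CMW}); hence the diagonal is absorbed into the error term.

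For the off-diagonal part I would follow the approximate Kac--Rice scheme of \cite{CMW}, Sections 3.4.1--3.4.2, and the partitioning argument of \cite{rudnickwigman} (see also \cite{Wig}), mimicking the derivation of \eqref{A_K-R}--\eqref{4:29}: split the domain $\{x\ne y\}$ into the short range $d(x,y)<C/\ell$ and the long range $d(x,y)>C/\ell$, with $C>0$ a sufficiently large universal constant. In the short range the Kac--Rice formula holds only approximately, but covering ${\cal S}^2$ with $O(\ell^2)$ caps of radius $\asymp C/\ell$ and using that a rescaled eigenfunction has $O(1)$ critical points in such a cap (in the $L^2$ sense) bounds the number of pairs of distinct critical points at distance $<C/\ell$ by $O(\ell^2)$; since the signed, index-restricted count is dominated in absolute value by ${\cal N}^c_{I_1}(f_\ell)\,{\cal N}^c_{I_2}(f_\ell)$, and hence by $[{\cal N}^c(f_\ell)]^2$, the very bound used in \cite{CMW} for $\E[{\cal N}^c(f_\ell)({\cal N}^c(f_\ell)-1)]$ applies verbatim and gives $O(\ell^2)$ for the short-range part of \eqref{sim}. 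In the long range the Kac--Rice formula is precise --- the $10$-dimensional Gaussian law of $(\nabla f_\ell(x),\nabla^2_E f_\ell(x),\nabla f_\ell(y),\nabla^2_E f_\ell(y))$ is non-degenerate for $d(x,y)>C/\ell$ after the dimension reduction of \cite{CMW} --- so, for each fixed $(j,k)$, the off-diagonal expectation restricted to $d(x,y)>C/\ell$ equals $\int_{d(x,y)>C/\ell}\iint_{I_1\times I_2}K^{j,k}_{2,\ell}(x,y;t_1,t_2)\,dt_1\,dt_2\,dx\,dy$, where $K^{j,k}_{2,\ell}$ is obtained from \eqref{4:29} by inserting the indicators $\ind_{\{\text{Ind}(-\nabla^2_E f_\ell(x))=j\}}$ and $\ind_{\{\text{Ind}(-\nabla^2_E f_\ell(y))=k\}}$ into the conditional expectation.

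It remains to carry out the sum over $j,k$. As the conditional expectation is linear and the sum is finite, I would bring $\sum_{j,k=0}^{2}(-1)^{j+k}$ inside it; the resulting integrand factorises as
\[
\Bigl(\sum_{j=0}^{2}(-1)^{j}\,|\det\nabla^2_E f_\ell(x)|\,\ind_{\{\text{Ind}(-\nabla^2_E f_\ell(x))=j\}}\Bigr)\Bigl(\sum_{k=0}^{2}(-1)^{k}\,|\det\nabla^2_E f_\ell(y)|\,\ind_{\{\text{Ind}(-\nabla^2_E f_\ell(y))=k\}}\Bigr),
\]
which by \eqref{simp} equals $\det(-\nabla^2_E f_\ell(x))\cdot\det(-\nabla^2_E f_\ell(y))$ almost surely (recall that $f_\ell$ is a.s.\ a Morse function, so $\nabla^2_E f_\ell$ is a.s.\ nonsingular at critical points). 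Hence $\sum_{j,k}(-1)^{j+k}K^{j,k}_{2,\ell}(x,y;t_1,t_2)=J_{2,\ell}(x,y;t_1,t_2)$ as in \eqref{J2}, and collecting the long-range main term with the $O(\ell^2)$ diagonal and short-range errors yields \eqref{sim}. The main obstacle is not this assembly, which amounts to the sign bookkeeping above, but the two analytic inputs quoted from \cite{CMW}: the non-degeneracy of the joint Gaussian law of the first- and second-order derivatives at pairs $(x,y)$ with $d(x,y)>C/\ell$ (so that the exact Kac--Rice formula applies on the long range), and the $O(\ell^2)$ estimate for the short-range contribution; I would also keep track of the fact that, once $C$ is fixed, all the implied constants are universal, as the statement requires.
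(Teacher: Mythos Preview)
Your proposal is correct and rests on the same two ingredients as the paper: the approximate Kac--Rice formula of \cite{CMW} (short-range $O(\ell^2)$, long-range exact) and the sign identity \eqref{simp}. The organization differs slightly. The paper first splits the double sum according to whether $j=k$ or $j\neq k$; for $j\neq k$ the counts $\mu_{j;1}$ and $\mu_{k;2}$ are over disjoint classes of critical points, so the approximate Kac--Rice formula of \cite{CMW} applies directly; for $j=k$ the paper further decomposes the intervals into $I_1\setminus I_2$, $I_2\setminus I_1$, $I_1\cap I_2$, so that each resulting expectation is either a product over disjoint classes or a genuine factorial moment $\E[\mu_{j;1\cap 2}(\mu_{j;1\cap 2}-1)]$ plus the first moment $\E[\mu_{j;1\cap 2}]=O(\ell^2)$---each of these being exactly of the form handled in \cite{CMW}. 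Your splitting by $\{x=y\}$ versus $\{x\neq y\}$ achieves the same reduction more directly and is arguably cleaner; the only cost is that you must check (as you do) that the short-range $O(\ell^2)$ bound in \cite{CMW} is proved for the total unsigned pair count and therefore dominates your signed, index-restricted sum. Both routes reach the same long-range integrand $J_{2,\ell}$ via \eqref{simp}.
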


\begin{proof}
We start by observing that 
\begin{equation} \label{5:36}
\sum_{j,k=0}^{2}(-1)^{j+k}\mathbb{E}[\mu _{j;1}(f_{\ell })\mu _{k;2}(f_{\ell
})]=\sum_{j\neq k}(-1)^{j+k}\mathbb{E}[\mu _{j;1}(f_{\ell })\mu
_{k;2}(f_{\ell })]+\sum_{j=0}^{2}\mathbb{E}[\mu _{j;1}(f_{\ell })\mu
_{j;2}(f_{\ell })].
\end{equation}
For the non diagonal terms in \eqref{5:36} with $j\neq k$, $j,k=0,1,2$, 
we directly obtain (see \cite{CMW} Section 3.4) that for any
sufficiently big constant $C>0$, we have 
\begin{equation*}
\mathbb{E}[\mu _{j;1}(f_{\ell })\mu _{k;2}(f_{\ell })]=\int_{d(x,y)>C/\ell
}\iint_{I_{1}\times I_{2}}\tilde{K}_{2,\ell
,j,k}(x,y;t_{1},t_{2})dt_{1}dt_{2}dxdy+O(\ell ^{2}),
\end{equation*}%
where 
\begin{align*}
\tilde{K}_{2,\ell ,j,k}(x,y;t_{1},t_{2})& =\varphi _{x,y,\ell }(t_{1},t_{2},%
\mathbf{0},\mathbf{0}) \\
& \hspace{-3.5cm}\times \mathbb{E}[|\det \nabla _{E}^{2}f_{\ell }(x)|\cdot
|\det \nabla _{E}^{2}f_{\ell }(y)|\cdot 1\hspace{-0.27em}\mbox{\rm l}_{\{%
\mathrm{Ind}(-\nabla _{E}^{2}f_{\ell }(x))=j\}}\cdot 1\hspace{-0.27em}%
\mbox{\rm l}_{\{\mathrm{Ind}(-\nabla _{E}^{2}f_{\ell }(y))=k\}}\big|\nabla
f_{\ell }(x)=\nabla f_{\ell }(y)=\mathbf{0},f_{\ell }(x)=t_{1},f_{\ell
}(y)=t_{2}].
\end{align*}%
To work out the diagonal terms $\mathbb{E}[\mu _{j;1}(f_{\ell })\mu
_{j;2}(f_{\ell })]$, $j=0,1,2$, in \eqref{5:36}, we introduce the following
notation: 
\begin{align*}
\mu _{j;i\setminus i^{\prime }}(f_{\ell })& =\#\{x\in {\mathcal{S}}%
^{2}:f_{\ell }(x)\in I_{i}\setminus I_{i^{\prime }},\nabla f_{\ell }(x)=0,%
\text{Ind}(-\nabla _{E}^{2}f_{\ell }(x))=j\}, \\
\mu _{j;i\cap i^{\prime }}(f_{\ell })& =\#\{x\in {\mathcal{S}}^{2}:f_{\ell
}(x)\in I_{i}\cap I_{i^{\prime }},\nabla f_{\ell }(x)=0,\text{Ind}(-\nabla
_{E}^{2}f_{\ell }(x))=j\},
\end{align*}%
with $i,i^{\prime }=1,2$, so that 
\begin{equation*}
\mu _{j;1}(f_{\ell })\mu _{j;2}(f_{\ell })=(\mu _{j;1\setminus 2}(f_{\ell
})+\mu _{j;1\cap 2}(f_{\ell }))(\mu _{j;2\setminus 1}(f_{\ell })+\mu
_{j;1\cap 2}(f_{\ell }))
\end{equation*}%
and then 
\begin{align}
& \mathbb{E}[\mu _{j;1}(f_{\ell })\mu _{j;2}(f_{\ell })]  \notag \\
& =\mathbb{E}[\mu _{j;1\setminus 2}(f_{\ell })\mu _{j;2\setminus 1}(f_{\ell
})]+\mathbb{E}[\mu _{j;1\setminus 2}(f_{\ell })\mu _{j;1\cap 2}(f_{\ell })]+%
\mathbb{E}[\mu _{j;2\setminus 1}(f_{\ell })\mu _{j;1\cap 2}(f_{\ell })]+%
\mathbb{E}[\mu _{j;1\cap 2}(f_{\ell })\mu _{j;2\cap 1}(f_{\ell })]  \notag \\
& =\mathbb{E}[\mu _{j;1\setminus 2}(f_{\ell })\mu _{j;2\setminus 1}(f_{\ell
})]+\mathbb{E}[\mu _{j;1\setminus 2}(f_{\ell })\mu _{j;1\cap 2}(f_{\ell })]+%
\mathbb{E}[\mu _{j;2\setminus 1}(f_{\ell })\mu _{j;1\cap 2}(f_{\ell })] 
\notag \\
& \;\;+\mathbb{E}[\mu _{j;1\cap 2}(f_{\ell })(\mu _{j;1\cap 2}(f_{\ell
})-1)]+\mathbb{E}[\mu _{j;1\cap 2}(f_{\ell })].  \label{pezzi}
\end{align}%
For the last term in \eqref{pezzi} we note that the expected value of the
EPC of the excursion set is $O(\ell ^{2})$, while for the other terms we can
apply again the approximate Kac-Rice formula. For example we have: 
\begin{align*}
\mathbb{E}[\mu _{j;1\setminus 2}(f_{\ell })\mu _{j;2\setminus 1}(f_{\ell
})]& =\int_{d(x,y)>C/\ell }\iint_{I_{1}\setminus I_{2}\times I_{2}\setminus
I_{1}}\tilde{K}_{2,\ell ,j,j}(x,y;t_{1},t_{2})dt_{1}dt_{2}dxdy+O(\ell ^{2}),
\\
\mathbb{E}[\mu _{j;1\cap 2}(f_{\ell })(\mu _{j;1\cap 2}(f_{\ell })-1)]&
=\int_{d(x,y)>C/\ell }\iint_{I_{1}\cap I_{2}\times I_{1}\cap I_{2}}\tilde{K}%
_{2,\ell ,j,j}(x,y;t_{1},t_{2})dt_{1}dt_{2}dxdy+O(\ell ^{2}),
\end{align*}%
and then 
\begin{equation*}
\mathbb{E}[\mu _{j;1}(f_{\ell })\mu _{j;2}(f_{\ell })]=\int_{d(x,y)>C/\ell
}\iint_{I_{1}\times I_{2}}\tilde{K}_{2,\ell
,j,j}(x,y;t_{1},t_{2})dt_{1}dt_{2}dxdy+O(\ell ^{2}).
\end{equation*}%
We can apply now the identity \eqref{simp} to get: 
\begin{equation*}
\sum_{j,k=0}^{2}(-1)^{j+k}\mathbb{E}[\mu _{j;1}(f_{\ell })\mu _{k;2}(f_{\ell
})]=\int_{d(x,y)>C/\ell }\iint_{I_{1}\times I_{2}}{J}_{2,\ell
}(x,y;t_{1},t_{2})dt_{1}dt_{2}dxdy+O(\ell ^{2})
\end{equation*}%
where 
\begin{align*}
& J_{2,\ell }(x,y;t_{1},t_{2}) \\
& \;\;=\varphi _{x,y,\ell }(t_{1},t_{2},\mathbf{0},\mathbf{0})\;\mathbb{E}%
[\det (-\nabla _{E}^{2}f_{\ell }(x))\cdot \det (-\nabla _{E}^{2}f_{\ell }(y))%
\big|\nabla f_{\ell }(x)=\nabla f_{\ell }(y)=\mathbf{0},f_{\ell
}(x)=t_{1},f_{\ell }(y)=t_{2}].
\end{align*}
\end{proof}

Our second tool yields an analytic expression for the alternating sum in the
variance computation. 
\begin{proposition}
\label{lemma3}
\begin{align*}
&\sum_{j,k=0}^{2} (-1)^{j+k} \E[\mu_{j;1}(f_{\ell }) \mu_{k;2} (f_{\ell }) ] - \E[\chi (A_{I_1}(f_{\ell}; {\cal S}^2))] \E[\chi (A_{I_2}(f_{\ell}; {\cal S}^2))]\\
&\;\;= \frac{\ell^3}{4} 
\Big[  \int_{I_1} p_1(t_1)d t_1  \int_{I_2} p_1(t_2)d t_2 -\iint_{I_1 \times I_2} g_2(t_1,t_2) d t_1 d t_2  +16  \int_{I_1} g_3(t_1)d t_1  \int_{I_2} g_3(t_2)d t_2  \Big]+O(\ell^{5/2}),
\end{align*}
where
\begin{align} \label{3:24}
p_1(t)&=\frac{1}{(2 \pi)^{3/2}} \int_{\mathbb{R}^2} (x_1 t \sqrt{8}-x_1^2-x_2^2) \exp\left\{-\frac  3 2 t^2\right\} \exp\{-\frac 1 2 (x_1^2+x_2^2-\sqrt 8 t x_1)\} d x_1 d x_2, 
\end{align}
\begin{align} \label{3:31}
g_2(t_1,t_2)&= \frac{1}{2} \frac{1}{(2 \pi)^{3}}\iint_{\mathbb{R}^2 \times
\mathbb{R}^2} \left( z_1 \sqrt 8 t_1  -z_1^2-z_2^2\right) \exp\left\{ - \frac{3 }{2} t_1^2 \right\}
\exp\left\{-\frac 1 2 (z_1^2+z_2^2- \sqrt 8 t_1 z _1)\right\} \\
&\;\; \times \left( w_1 \sqrt 8 t_2  -w_1^2-w_2^2\right) \exp\left\{ - \frac{3 }{2} t_2^2 \right\}
\exp\left\{-\frac 1 2 (w_1^2+w_2^2- \sqrt 8 t_2 w _1)\right\}  \nonumber \\
&\;\; \times \left[-6+(3 t_1-\sqrt{2} z_1)^2+(3 t_2 -\sqrt{2} w_1)^2\right] d z_1 d z_2 \nonumber 
d w_1 d w_2,
\end{align}
and
\begin{align} \label{3:311}
g_3(t)= \frac{1}{8} \frac{1}{ (2 \pi)^{3/2}}\int_{\mathbb{R}^2} \left(z_1 \sqrt 8 t -z_1^2-z_2^2\right)
\exp\left\{- \frac 3{2} t^2\right\} \exp\left\{-\frac 1 2 (
z_1^2+z_2^2- \sqrt 8 t z _1)\right\} \left[3-(3 t -\sqrt{2} z_1)^2\right] d z_1 d z_2.
\end{align}
\end{proposition}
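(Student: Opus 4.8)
The plan is to start from Proposition~\ref{lemma2} and evaluate the integral of $J_{2,\ell}(x,y;t_1,t_2)$ over the long-range regime $d(x,y)>C/\ell$, subtracting off the product of expectations computed via the GKF \eqref{expect}. The key structural fact is that, by isotropy, the Gaussian density $\varphi_{x,y,\ell}$ and the conditional expectation of the product of Hessian determinants depend only on $r=d(x,y)$; hence the spatial integral reduces to a one-dimensional integral $\int_{C/\ell}^{\pi} (\cdots)\, \sin r\, dr$ (up to the constant $4\pi^2$ from the remaining sphere integration). The first step is therefore to set up the conditional Gaussian computation along the equator $\theta_x=\theta_y=\pi/2$, using the simplified gradient and Hessian displayed in the excerpt, and to write out the covariance structure of the $10$-dimensional Gaussian vector $(f_\ell(x),f_\ell(y),\nabla f_\ell(x),\nabla f_\ell(y),\nabla_E^2 f_\ell(x),\nabla_E^2 f_\ell(y))$ in terms of the Legendre polynomial $P_\ell(\cos r)$ and its derivatives. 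This is exactly the computation already carried out in \cite{CMW} for the critical points problem; here the novelty is only the insertion of the factors $\1_I(f_\ell(x))\1_I(f_\ell(y))$ and the use of the identity \eqref{simp} to drop the absolute values, which is what allows the conditional expectation to factor into an honest Gaussian integral.

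The second step is the asymptotic analysis. Following \cite{CMW}, one uses the Hilb-type asymptotics for $P_\ell(\cos r)$ and its derivatives, separating the ``bulk'' region $r \asymp 1$ from the boundary. The leading-order term in $\ell$ of $J_{2,\ell}$, after rescaling, should produce a density that splits as a sum of three pieces: a ``product'' term (the square of a single-point contribution, giving $\int_{I_1}p_1 \cdot \int_{I_2}p_1$), a genuinely two-point correlation term $g_2(t_1,t_2)$ that carries the spatial-decay information, and a further factorizing term with the combinatorial factor $16$ coming from the $\det(-\nabla_E^2)$ having a $2\times 2$ structure whose cross-terms contribute at the same order. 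The explicit Gaussian integrals in \eqref{3:24}, \eqref{3:31}, \eqref{3:311} are then obtained by conditioning $f_\ell$ (and its relevant derivative combinations) on the gradient vanishing: the appearance of $\sqrt{8}$, the factor $e^{-3t^2/2}$, and the shifted quadratic $e^{-\frac12(z_1^2+z_2^2-\sqrt8\,t z_1)}$ reflect precisely the conditional mean and variance of the Hessian entries given $f_\ell = t$ and $\nabla f_\ell = 0$ in the high-frequency limit (where, after the standard rescaling by $\ell$, the field behaves like a Euclidean Gaussian field with Laplacian constraint $\mathrm{tr}\,\nabla^2 f = -f$). One then subtracts the GKF expectations \eqref{expect}, whose product is $O(\ell^4)$ but whose leading $\ell^3$-coefficient is absorbed into the ``product'' piece, leaving the stated difference.

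The main obstacle I expect is controlling the error terms and justifying the passage from the exact $J_{2,\ell}$ to its leading-order surrogate uniformly in $t_1,t_2$ over unbounded intervals, together with the boundary region $r \lesssim 1/\ell$: one must show that (i) the short-range contribution genuinely is $O(\ell^2)$ (imported from the partitioning argument of \cite{rudnickwigman,CMW} recalled before \eqref{A_K-R}), (ii) the subleading corrections in the Hilb asymptotics contribute only $O(\ell^{5/2})$, and (iii) the Gaussian integrals defining $p_1,g_2,g_3$ converge absolutely and the remainder after Taylor-expanding the conditional covariance matrix in powers of $1/\sqrt\ell$ is integrable in $r$ against $\sin r$. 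Point (ii)–(iii) is where the $\ell^{5/2}$ rather than $\ell^{5/2}\log\ell$ or $\ell^{8/3}$ bound is delicate; one leans on the oscillation and $r^{-1/2}$ decay of $P_\ell(\cos r)$ to make the two-point term's tail summable. The remaining algebraic reduction—expanding $\det(-\nabla_E^2 f_\ell(x))\det(-\nabla_E^2 f_\ell(y))$ into its monomials, taking conditional Gaussian expectations term by term, and recognizing the result as the three displayed integrals—is lengthy but routine, and I would relegate it to the computation that follows this statement.
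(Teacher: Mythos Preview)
Your proposal tracks the paper's proof in outline: start from Proposition~\ref{lemma2}, reduce by isotropy to a one-dimensional integral in $\phi$, and extract the $\ell^3$ term by the asymptotic machinery of \cite{CMW}. The set-up along the equator and the error-control discussion in your third paragraph are both in line with what the paper does.

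Where your account goes astray is the mechanism that produces the three displayed terms and the cancellation with the product of expectations. The device in the paper (inherited from \cite{CMW}) is not a direct expansion of $J_{2,\ell}$ in $1/\sqrt{\ell}$ via Hilb asymptotics; rather, one parametrizes the conditional covariance matrix of the rescaled Hessian pair as $\Delta(\mathbf{a})$, where $\mathbf{a}=\mathbf{a}_\ell(\phi)\in\mathbb{R}^8$ collects the perturbative entries that vanish in the long range, and Taylor-expands the Gaussian integral $q(\mathbf{a};t_1,t_2)$ as an analytic function of $\mathbf{a}$ around $\mathbf{0}$. The \emph{zeroth-order} term $q(\mathbf{0};t_1,t_2)$ factors over $x$ and $y$ and, after the $\phi$-integration, reproduces $\mathbb{E}[\chi(A_{I_1})]\,\mathbb{E}[\chi(A_{I_2})]$; this cancellation happens at order $\ell^4$, not $\ell^3$ as you suggest. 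The three surviving pieces in the statement are then specific higher-order Taylor contributions: in the paper's notation $g_2$ and $g_3$ are obtained from $[\partial q/\partial a_3]_{\mathbf{a}=\mathbf{0}}$ and $[\partial^2 q/\partial a_7^2]_{\mathbf{a}=\mathbf{0}}$, and the numerical prefactors ($-16$, $+32$, later repackaged into $-1$ and $+16$) come from integrating the corresponding monomials $a_i(\phi)$ and $a_i(\phi)^2$ over $\phi$ using Hilb asymptotics---not from any $2\times2$ combinatorics of the Hessian determinant. Without this $\mathbf{a}$-parametrization you would not isolate the correct three contributions, nor would your heuristic for the factor $16$ land on the right constant; the explicit integrands \eqref{3:24}--\eqref{3:311} are exactly these partial derivatives evaluated at $\mathbf{a}=\mathbf{0}$.
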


\begin{proof}
In view of Proposition \ref{lemma2} and by isotropy we have to study the
asymptotic behaviour of 
\begin{align}  \label{2:15}
16 \pi^2 \int_{C/\ell}^{\pi/2} \iint_{I_1 \times I_2} {J}_{2,\ell} (\phi;
t_1,t_2) d t_1 d t_2 \sin \phi \; d \phi- \mathbb{E}[\chi (A_{I_1}(f_{\ell}; 
{\mathcal{S}}^2))] \mathbb{E}[\chi (A_{I_2}(f_{\ell}; {\mathcal{S}}%
^2))]+O(\ell^{2}).
\end{align}
Again we stress that $J_{2,\ell}$ in \eqref{J2} is analogous to $K_{2,\ell}$
in \eqref{K2} except for the fact that the absolute value of the Hessian
determinant has been dropped (by means of Morse theorem).

The proof of this proposition follows along the same lines as in the
argument given in \cite{CMW} Section 4.1.2 where we study the asymptotic
behaviour of 
\begin{align*}
16 \pi^2 \int_{C/\ell}^{\pi/2} \iint_{I \times I} {K}_{2,\ell} (\phi;
t_1,t_2) d t_1 d t_2 \sin \phi \; d \phi- \big( \mathbb{E}[{\mathcal{N}}%
^c_I(f_{\ell})] \big)^2
\end{align*}
to obtain the variance of the number of critical points. Therefore here we
just sketch the main steps and we refer to \cite{CMW} Section 4.1.2 for a
complete proof.

The asymptotic analysis is based on the properties of multivariate
conditional Gaussian variables, and on an asymptotic study of the tail decay
of Legendre polynomials and their derivatives that appear in the conditional
covariance matrix of the Gaussian vector. In fact, for $d(x,y)>C/\ell$, $C$
large enough, Kac-Rice formula holds exactly and we one can exploit the fact
that a Gaussian expectation is an analytic function with respect to the
parameters of the corresponding covariance matrix outside its singularities.
It is then possible to compute the Taylor expansion of these expected values
around the origin with respect to the vanishing entries 
\begin{equation*}
\mathbf{a}=\mathbf{a}_{\ell}(\phi)=(a_{1,\ell}(\phi),
a_{2,\ell}(\phi),a_{3,\ell}(\phi),a_{4,\ell}(\phi),a_{5,\ell}(\phi),a_{6,%
\ell}(\phi),a_{7,\ell}(\phi),a_{8,\ell}(\phi))
\end{equation*}
of the conditional covariance matrix $\Delta_{\ell}(\phi)=\Delta(\mathbf{a})$
(see \cite{CMW} Appendix B) of the centred Gaussian random vector 
\begin{equation*}
\frac{\sqrt 8}{ \lambda_{\ell}} (\nabla ^{2}f_{\ell }(x),\nabla ^{2}f_{\ell
}(y)\big|\nabla f_{\ell }(x)=\nabla f_{\ell }(y)=\mathbf{0}).
\end{equation*}
Three terms in the Taylor expansion (depending on the intervals $I_1$ and $%
I_2$) give an asymptotically significant contribution, whereas the rest is
negligible: 
\begin{align}  \label{dominantttttt}
&16 \pi^2 \int_{C/\ell}^{\pi/2} \iint_{I_1 \times I_2} {J}_{2,\ell} (\phi;
t_1,t_2) d t_1 d t_2 \sin \phi \; d \phi- \mathbb{E}[\chi (A_{I_1}(f_{\ell}; 
{\mathcal{S}}^2))] \mathbb{E}[\chi (A_{I_2}(f_{\ell}; {\mathcal{S}}^2))] 
\notag \\
&= \ell^3 \Big\{ \frac{1}{4} \int_{I_1} p_{1} (t_1 ) d t_1 \int_{I_2}
p_{1}(t_2 ) d t_2 -16 \iint_{I_1 \times I_2} \Big[\frac{\partial }{\partial
a_{3} }q (\mathbf{a}; t_{1},t_{2})\Big]_{\mathbf{a}=\mathbf{0}} d t_1 d t_2 
\notag \\
&\;\;+32 \iint_{I_1 \times I_2} \Big[\frac{\partial ^{2}}{\partial a_{7}^2}q
(\mathbf{a}; t_{1},t_{2})\Big]_{\mathbf{a}=\mathbf{0}} d t_1 d t_2 \Big\} %
+O(\ell^{5/2}),
\end{align}
here we set 
\begin{align*}
q (\mathbf{a};t_1,t_2)&=\frac{1}{(2\pi )^{3}}\iint_{\mathbb{R}^{2}\times 
\mathbb{R}^{2}} \left (z_{1} \sqrt 8 t_1 -z_{1}^{2}-z_{2}^{2}\right) \cdot
\left( w_{1} \sqrt 8 t_2 -w_{1}^{2}-w_{2}^{2}\right) \\
& \hspace{0.2cm} \times \hat{q} (\mathbf{a};
t_{1},t_{2};z_{1},z_{2},w_{1},w_{2})dz_{1}dz_{2}dw_{1}dw_{2},
\end{align*}
with 
\begin{equation*}
\hat{q} (\mathbf{a};t_{1},t_{2};z_{1},z_{2},w_{1},w_{2}) =\frac{1}{\sqrt{%
\det (\Delta (\mathbf{a}))}}\exp \left\{-\frac{1}{2}%
v_{t_{1},t_{2}}(z_{1},z_{2},w_{1},w_{2})\Delta (\mathbf{a}%
)^{-1}v_{t_{1},t_{2}}(z_{1},z_{2},w_{1},w_{2})^{t}\right\},
\end{equation*}
and $p_{1}$ defined in \eqref{3:24}. Note that the zeroth order term in the
Taylor expansion cancels out with 
\begin{equation*}
\mathbb{E}[\chi (A_{I_1}(f_{\ell}; \mathcal{S}^2))] \mathbb{E}[\chi
(A_{I_2}(f_{\ell}; \mathcal{S}^2))]
\end{equation*}
that is of order $O(\ell^4)$. The expressions for $g_2$ and $g_3$ in %
\eqref{3:31} and \eqref{3:311} follow from the evaluation of the partial
derivatives in formula \eqref{dominantttttt}; once more we refer to \cite%
{CMW} Section 4.1.2 for details.
\end{proof}

\noindent We can now prove Theorem \ref{cov}.

\begin{proof}[Proof of Theorem \protect\ref{cov}]
We first write: 
\begin{align*}
\text{Cov}[ \chi (A_{I_1}(f_{\ell }; {\mathcal{S}}^2) ) , \chi
(A_{I_2}(f_{\ell }; {\mathcal{S}}^2) ) ]=\mathbb{E}[\chi(A_{I_1}(f_{\ell }; {%
\mathcal{S}}^2) ) \chi(A_{I_2}(f_{\ell }; {\mathcal{S}}^2) ) ]- \mathbb{E}%
[\chi (A_{I_1}(f_{\ell }; {\mathcal{S}}^2) )] \mathbb{E}[\chi
(A_{I_2}(f_{\ell }; {\mathcal{S}}^2) )]
\end{align*}
where, in view of \eqref{morse}, 
\begin{align*}
\mathbb{E}[\chi(A_{I_1}(f_{\ell }; {\mathcal{S}}^2) ) \chi(A_{I_2}(f_{\ell
}; {\mathcal{S}}^2) ) ]=\mathbb{E}\big[ \sum_{j, k=0}^2 (-1)^{j+k} \mu_{j;1}
(f_{\ell }) \mu_{k;2} (f_{\ell })\big] =\sum_{j, k=0}^2 (-1)^{j+k} \mathbb{E}%
[ \mu_{j;1}(f_{\ell }) \mu_{k;2}(f_{\ell })] .
\end{align*}
Now by Proposition \ref{lemma3} the covariance is asymptotic to 
\begin{align*}
&\text{Cov}[\chi (A_{I_1}(f_{\ell }; {\mathcal{S}}^2) ) \chi
(A_{I_2}(f_{\ell }; {\mathcal{S}}^2) ) ] \\
&=\frac{\ell^3}{4} \Big[ \int_{I_1} p_1(t_1)d t_1 \int_{I_2} p_1(t_2)d t_2
-\iint_{I_1 \times I_2} g_2(t_1,t_2) d t_1 d t_2 +16 \int_{I_1} g_3(t_1)d
t_1 \int_{I_2} g_3(t_2)d t_2 \Big]+O(\ell^{5/2}).
\end{align*}
Now define 
\begin{align*}
p_2(t)&=\frac{1}{(2 \pi)^{3/2}} \int_{\mathbb{R}^2} (3t-\sqrt{2} x_1)^2 (x_1
t \sqrt{8}-x_1^2-x_2^2) \exp\left\{-\frac 3 2 t^2 \right\} \exp\left\{-\frac
1 2 (x_1^2+x_2^2-\sqrt 8 t x_1)\right\} d x_1 d x_2,
\end{align*}
it is easy to see that the functions $g_2$ and $g_3$ in \eqref{3:31} and %
\eqref{3:311} can be rewritten as 
\begin{align*}
g_2(t_1,t_2)= - 3 p_1(t_1) p_1(t_2)+ \frac 1 2 p_2(t_1) p_1(t_2)+ \frac 1 2
p_1(t_1) p_2(t_2), \hspace{0.5cm} g_3(t)= \frac 3 8 p_1(t)- \frac 1 8 p_2(t).
\end{align*}
Moreover $p_1$ and $p_2$ can be explicitly computed and we have: 
\begin{align*}
p_1(t)&= \frac{\sqrt 2}{\sqrt \pi} (t^2-1) e^{-\frac {t^2} 2 }, \hspace{1cm}
p_2(t) = \frac{\sqrt 2}{\sqrt \pi} (t^4+t^2-4) e^{-\frac {t^2} 2 }.
\end{align*}
It follows that we can rewrite the coefficient of the leading term in the
following form: 
\begin{align}  \label{10:26}
& \int_{I_1} p_1(t_1)d t_1 \int_{I_2} p_1(t_2)d t_2 -\iint_{I_1 \times I_2}
g_2(t_1,t_2) d t_1 d t_2 +16 \int_{I_1} g_3(t_1)d t_1 \int_{I_2} g_3(t_2)d
t_2  \notag \\
&= {\mathcal{I}}_{1,1} {\mathcal{I}}_{2,1} - \iint_{I_1 \times I_2} [- 3
p_1(t_1) p_1(t_2)+ \frac 1 2 p_2(t_1) p_1(t_2)+ \frac 1 2 p_1(t_1) p_2(t_2)]
d t_1 d t_2  \notag \\
&+16 \int_{I_1} [\frac 3 8 p_1(t_1)- \frac 1 8 p_2(t_1)]d t_1 \int_{I_2}
[\frac 3 8 p_1(t_2)- \frac 1 8 p_2(t_2)] d t_2  \notag \\
&= {\mathcal{I}}_{1,1} {\mathcal{I}}_{2,1} - [ -3 {\mathcal{I}}_{1,1} {%
\mathcal{I}}_{2,1}+ \frac 1 2 {\mathcal{I}}_{1,2} {\mathcal{I}}_{2,1}+\frac
1 2 {\mathcal{I}}_{1,1} {\mathcal{I}}_{2,2} ] + 16 [ \frac 3 8 {\mathcal{I}}%
_{1,1}- \frac 1 8 {\mathcal{I}}_{1,2}] [\frac 3 8 {\mathcal{I}}_{2,1} -
\frac 1 8 {\mathcal{I}}_{2,2} ]  \notag \\
&= \frac {25}{4} {\mathcal{I}}_{1,1} {\mathcal{I}}_{2,1}- \frac 5 4 {%
\mathcal{I}}_{1,2} {\mathcal{I}}_{2,1} -\frac {5}{4} {\mathcal{I}}_{1,1} {%
\mathcal{I}}_{2,2}+ \frac 1 4 {\mathcal{I}}_{1,2} {\mathcal{I}}_{2,2},
\end{align}
where $\mathcal{I}_{i,j}$, for $i,j=1,2$, are given by 
\begin{align*}
{\mathcal{I}}_{i,j}=\int_{I_i} p_j(t) d t, \hspace{1cm} p_1(t)= \frac{\sqrt 2%
}{\sqrt \pi} (t^2-1) e^{-\frac {t^2} 2 }, \hspace{1cm} p_2(t)= \frac{\sqrt 2%
}{\sqrt \pi} (t^4+t^2-4) e^{-\frac {t^2} 2 }.
\end{align*}
Formula \eqref{10:26} can be further simplified as follows: 
\begin{align*}
\frac{1}{2 \pi} \int_{I_1} (-t_1^4+4 t_1^2-1) e^{-\frac{t_1^2}{2}} d t_1
\int_{I_2} (-t_2^4+4 t_2^2-1) e^{-\frac{t_2^2}{2}} d t_2.
\end{align*}
\end{proof}

\begin{proof}[Proof of Corollary \protect\ref{var}]
The asymptotic expression for the variance in \eqref{11:14_2} follows
immediately by setting $I_1=I_2=I$, and we have: 
\begin{align*}
\text{Var}[\chi (A_{I}(f_{\ell }; {\mathcal{S}}^2) )]&=\frac{\ell^3}{4} %
\Big[ \frac{1}{\sqrt{2 \pi}} \int_{I} (-t^4+4 t^2 -1) e^{-\frac {t^2} 2 } d
t \Big]^2+O(\ell^{5/2}).
\end{align*}
That is the statement of Theorem \ref{var}.
\end{proof}

\begin{proof}[Proof of Corollary \protect\ref{covcor}]
\noindent In the particular case where $I_1=[u_1,\infty)$ and $%
I_2=[u_2,\infty)$, we have the following explicit form for the leading term
of the covariance: %\begin{align*}
\begin{align*}
{\mathcal{I}}_1 {\mathcal{I}}_2= u_1 u_2 (u_1^2-1) (u_2^2-1) e^{- \frac{u_1^2%
}{2} } e^{- \frac{u_2^2}{2} }.
\end{align*}

\noindent Also, for $I_1=I_2=[u,\infty)$, our expression reduces to 
\begin{align*}
\text{Var}[\chi (A_{u}(f_{\ell }; {\mathcal{S}}^2) )] &= \frac{\ell^3}{8 \pi 
} (u-u^3)^2 e^{- u^2 } +O(\ell^{5/2}) =\frac{\ell^3}{8 \pi }
(H_3(u)+2H_1(u))^2 e^{- u^2 } +O(\ell^{5/2}),
\end{align*}
as claimed.
\end{proof}


\begin{thebibliography}{99}
\bibitem{adlerkou} R. J.~Adler, K.~Bartz, S.C.~Kou and A.~Monod, \emph{%
Estimating thresholding levels for random fields via Euler characteristics}.
Preprint, http://webee.technion.ac.il/people/adler/LKC-AAS.pdf

\bibitem{adlertaylor} R. J.~Adler, J. E.~Taylor, \emph{Random Fields and
Geometry}. Springer Monographs in Mathematics, Springer, New York, 2007.

\bibitem{adlerstflour} R. J.~Adler, J. E.~Taylor, \emph{Topological
Complexity of Smooth Random Functions}. Lectures from the 39th Probability
Summer School held in Saint-Flour, Springer, Heidelberg, 2011.

\bibitem{AAR} G. E.~Andrews, R.~Askey, R.~ Roy, \emph{Special Functions.
Encyclopedia of Mathematics and its Applications}. Cambridge University
Press, Cambridge, 1999.

\bibitem{azaiswschebor} J.-M.~Aza\"{\i}s, M.~Wschebor, \emph{Level sets and
extrema of random processes and fields}. John Wiley \& Sons Inc., Hoboken,
NJ, 2009.

\bibitem{Berry 1977} M. V.~Berry, \emph{Regular and irregular semiclassical
wavefunctions}. \textit{J. Phys. A 10}, 12, 2083-2091 (1977)

\bibitem{CMW} V.~Cammarota, D.~Marinucci and I.~Wigman, \emph{On the
distribution of the critical values of random spherical harmonics}.
arXiv:1409.1364 [math-ph].

\bibitem{CFMW} V.~Cammarota, Y.~Fantaye, D.~Marinucci, I.~Wigman, \textit{In
preparation}.

\bibitem{chavel} I.~Chavel, \emph{Riemannian Geometry. A modern Introduction}%
. Cambridge University Press, Cambridge, 2006.

\bibitem{chengxiao_a} D.~Cheng, Y.~Xiao, \emph{Excursion probability of
Gaussian random fields on sphere}. \textit{Bernoulli}. To appear. 
arXiv:1401.5498 [math-ph].

\bibitem{chengxiao_b} D.~Cheng, Y.~Xiao, \emph{The mean Euler characteristic
and excursion probability of Gaussian random fields}. \textit{Ann. Appl.
Probab.} To appear.  arXiv:1211.6693  [math-ph].

%\bibitem{CL}
%H. Cramer and M.R. Leadbetter, 
%{\em Stationary and related stochastic processes. Sample function properties and their applications}. 
%John Wiley \& Sons, Inc., New York-London-Sydney, 1967.

\bibitem{MaPeCUP} D.~Marinucci, G.~Peccati, \emph{Random Fields on the
Sphere: Representations, Limit Theorems and Cosmological Applications}.
London Mathematical Society Lecture Notes, Cambridge University Press,
Cambridge, 2011.

\bibitem{MV} D.~Marinucci, S.~Vadlamani, \emph{High-Frequency Asymptotics
for Lipschitz-Killing Curvatures of Excursion Sets on the Sphere}. \textit{%
Ann. App. Prob.} In press, arXiv:1303.2456 [math-ph].

\bibitem{DI} D.~Marinucci, I.~Wigman, \emph{On the area of excursion sets of
spherical Gaussian eigenfunctions}. \textit{J. Math. Phys.}, \textbf{52},
093301 (2011)

\bibitem{rudnickwigman} Z.~Rudnick and I.~Wigman, \emph{Nodal intersections
for random eigenfunctions on the torus}. arXiv:1402.3621 [math-ph].

\bibitem{rudnickwigmanyesha} Z.~Rudnick, I.~Wigman and N. Yesha, \emph{Nodal
intersections for random waves on the 3-dimensional torus}. arXiv:1501.07410
[math-ph].

\bibitem{taylor} J.~E.~Taylor, \emph{A Gaussian Kinematic Formula}. \textit{%
Ann.Probab.}, \textbf{34}, no. 1, 122-158 (2006)

\bibitem{Wig} I.~Wigman, \emph{Fluctuation of the Nodal Length of Random
Spherical Harmonics}. \textit{Communications in Mathematical Physics}, 298
no. 3 (2010), 787-831

\bibitem{Wsurvey} I.~Wigman, \emph{On the nodal lines of random and
deterministic Laplace eigenfunctions}. Spectral geometry, Proc. Sympos. Pure
Math., 84, Amer. Math. Soc., Providence, RI. 285-297, arXiv:1103.0150 (2012)
\end{thebibliography}
\end{document}